\newtheorem{proposition}{Proposition}
\newtheorem{remark}{Remark}
\newtheorem{corollary}{Corollary}
\newtheorem{Theorem}{Theorem}
\begin{document}
\title{An autoregressive (AR) model based stochastic unknown input realization and filtering technique }

\author{
Dan Yu,
\thanks{D. Yu is a Graduate Student Researcher, Department of Aerospace Engineering, Texas
  A\&M University, College Station}%
  \and
Suman Chakravorty \thanks{S. Chakravorty is an Associate Professor of Aerospace Engineering, Texas A\&M University, College Station}}

\maketitle
\begin{abstract}
This paper studies the state estimation problem of linear discrete-time systems with stochastic unknown inputs. 
The unknown input is a wide-sense stationary process while no other prior informaton needs to be known. We propose an autoregressive (AR) model based unknown input realization technique which allows us to recover the input statistics from the output data by solving an appropriate least squares problem, then fit an AR model to the recovered input statistics and construct an innovations model of the unknown inputs using the eigensystem realization algorithm (ERA). An augmented state system is constructed and the standard Kalman filter is applied for state estimation. A reduced order model (ROM) filter is also introduced to reduce the computational cost of the Kalman filter.  Two numerical examples are given to illustrate the procedure.
\end{abstract}
\section{Introduction}
In this paper, we consider the state estimation problem for systems with unknown stochastic inputs. The main contribution of our work is that when no prior information of the unknown inputs is known, we recover the statistics of  the unknown inputs from the measurements, and 
then construct an innovations model of the unknown inputs from the recovered statistics such that the standard Kalman filter can be applied for state estimation. The innovations model is constructed by fitting an autoregressive (AR) model to the recovered input correlation data from which a state space model is constructed using the balanced realization technique. The method is tested on stochastically perturbed heat and laminar flow problems. 

The problem of state estimation of systems with unknown inputs has received considerable attention over the past few decades. 
The unknown input observer (UIO) has been well established for  deterministic systems \cite{wang1975,bhatta, kudva}. Various methods of building full-order or reduced-order observers have been developed, such as \cite{hou1992, hui1993,darouach1994}.  Recently, sliding mode observers have been proposed for systems with unknown inputs  \cite{ssurvey}. The design parameters and matrices need to be well chosen to satisfy certain conditions in order for the observers to perform well. For systems without the ``observer matching"  condition being satisfied, a high-gain approach is proposed \cite{zak2010}.  The high-gain observers are used as  approximate differentiators to obtain the estimates of the auxiliary outputs. In the presence of measurement noise, the high-gain observer amplifies the noise, and extra care needs to be taken when designing the gain matrix. 

For stochastic systems, the problem of state estimation is known as unknown input filtering (UIF), and many UIF approaches are based on the Kalman filter \cite{uif2, uif3, uif1}. When the dynamics of the unknown inputs is available, for example, if it can be assumed to be a wide-sense stationary process with known mean and covariance, one common approach called Augmented State Kalman Filter (ASKF) is used, where the states are augmented with the unknown inputs \cite{uif4}. To reduce the computational complexity of ASKF, optimal two-stage and three-stage Kalman filters have been developed to decouple the augmented filter into two parallel reduced-order filters by applying a U-V transformation \cite{uif9,uif7,uif6}. When no prior information about the unknown input is available, an unbiased minimum-variance (UMV) filtering technique has been developed \cite{uif5, uif8}. 
The problem is transformed into finding a gain matrix such that the trace of the estimation error matrix is minimized. Certain algebraic constraints must be satisfied for the unbiased estimator to exist. In both the approaches above, the process noise is assumed to be white noise with known covariance.

In practice, there are many applications where the unknown inputs can be modeled as a stochastic process. For example, the state estimation of perturbed laminar flows is considered in \cite{fluidap1}. It shows that the external disturbances (as well as the sensor noise and initial conditions) can be modeled as unknown  stochastic inputs  which perturb the linearized Navier-Stoke equations. Thus, the state estimation problem of such system is transformed into the unknown input filtering problem with stochastic unknown inputs. Also, our work can be applied to identify the statistics of colored process noise. There is some research that considers the Kalman filtering with unknown noise covariances \cite{noiseap2,noiseap1}. The process noise is assumed to be white noise with unknown covariance, while in our approach, the process noise can be colored in time as well. There are also applications of our technique in signal processing, such as the wideband power spectrum estimation \cite{signalap1}, where the problem is to recover the unknown power spectrum of a wide-sense stationary signal from the obtained sub-Nyquist rate samples.

In this paper, we address the state estimation problem of systems with stochastic unknown inputs. The unknown inputs are assumed to be wide sense stationary, while no other information about the unknown inputs is known. We propose a new unknown input filtering approach based on system realization techniques. Instead of constructing the gain matrix which needs to satisfy certain constraints, we apply the standard Kalman filtering using the following procedure: 1) recover the statistics of the unknown inputs from the measurements by solving an appropriate least squares problem, 2) find a spectral factorization of unknown input process by fitting an autoregressive (AR) model, 3) construct an innovations model of the unknown inputs  via the eigensystem realization algorithm (ERA) \cite{JUANG} to the recovered input correlation data, and 4) apply the Augmented State Kalman Filter for state estimation. Different from existing methods, we construct a stochastic unknown input model  from sensor data,  which can be colored in time. To reduce the computational cost of the ASKF, we apply the Balanced  Proper Orthogonal Decomposition (BPOD) technique \cite{rowley1} to construct a reduced order model (ROM) for filtering. 

The main advantage of the AR model based algorithm we propose is that the performance of the algorithm is better than the ASKF, OTSKF and UMV algorithms when the unknown inputs can be treated as WSS processes with rational PSDs. The AR model based algorithm we propose constructs one particular realization of the true unknown input model, and the performance of the AR model based algorithm is the same as OTSKF when the assumed unknown input model used in OTSKF is accurate, and is better than UMV algorithm in the sense that the error covariances are smaller.  With the increase of the sensor noise, we have seen that the performance of AR  model based algorithm gets much better than the UMV algorithm. 

The paper is organized as follows. In Section \ref{Section 2}, the problem is formulated, and general assumptions are made about the system and the unknown inputs. In Section \ref{section 3},  the AR based unknown input realization approach is proposed. The unknown input statistics are recovered from the measurements, then a linear model is constructed using an AR model and the ERA is used to generate a balanced minimal realization of the unknown inputs. 
After an innovations model of the unknown inputs is constructed, the ASKF is applied for state estimation in Section \ref{Section 5}. Also, a ROM constructed using the BPOD is introduced to reduce the computational cost of Kalman filter. Section \ref{Section 4} presents two numerical examples that utilize the proposed technique.

\section{Problem Formulation}\label{Section 2}

Consider a complex valued linear time-invariant discrete time system:
\begin{eqnarray} \label{original system}
x_k = A x_{k-1} + B u_{k-1}, \nonumber \\
y_k = C x_k + v_k,
\end{eqnarray}
where $x_k \in \mathbb{C}^n$,  $y_k \in  \mathbb{C}^q$, $v_k \in  \mathbb{C}^q$, $u_k \in \mathbb{C}^p$  are the state vector, the measurement vector, the measurement white noise with known covariance, and the unknown stochastic inputs respectively. The process $u_k$  is used to  model the presence of the external disturbances, process noise, and unmodelled terms. Here,  $A \in  \mathbb{C}^{n \times n}$, $B \in  \mathbb{C}^{n \times p}$, $C \in  \mathbb{C}^{q \times n}$ are known.                                                                                                                                                                                                                                                                                                                                                                                                                                                                                                                                                                                                                                                                                                                                                                                                                                                                                                                                                                                                                                                                                                                                                                                                                                                                                                                                                                                                                                                                                                                                                                                                                                                                               

Denote $h_i = CA^{i-1}B, i = 1, 2, \cdots$ as the  Markov parameters of system (\ref{original system}). We use $x^*$ to denote the complex conjugate transpose of $x$, and $x^T$ to denote the transpose of $x$. 
Denote  $\bar{h}_i$ as the matrix $h_i$ with complex conjugated entries, and $h_i^* = (\bar{h}_i)^T$. $\| A\| = (\sum_{i, j =1}^n |a_{i,j}|^2)^{1/2}$ denotes the Frobenius norm of matrix $A$, and $\| x \|_2 = ( |x_1|^2 + |x_2|^2 + \cdots + |x_n|^2)^{1/2}$ denotes the Euclidean norm of vector $x$. 

The following assumptions are made about system (\ref{original system}):
\begin{itemize}
\item A1. $A$ is a stable matrix, and $(A, C)$ is detectable. 
\item A2. rank$(B) = p$, rank$(C) = q$, $p \leq q$ and rank $(CAB)$ =  rank $(B) =  p$.
\item A3. $u_k$ and $v_k$ are uncorrelated.
\item A4. We further assume that the unknown input $u_k$ can be treated as a WSS process:
\begin{eqnarray} \label{actual colored noise}
\xi_k = A_e \xi_{k-1} + B_e \nu_{k-1}, 
u_k = C_e \xi_k + \mu_k,
\end{eqnarray}
where $\nu_k$, $\mu_k$ are uncorrelated white noise processes.
\end{itemize}

\begin{remark}
A2 is a weaker assumption than the so-called ``observer matching"  condition used in unknown input observer design. The observer matching condition requires rank $(CB)$ = rank $(B) = p$, which in practice, may be too restrictive.  A2 implies that if there are $p$ inputs, then there should be at least $p$ controllable and observable modes.  A4 implies that $u_k$ is a WSS process with a rational power spectrum.
\end{remark}

In this paper, we consider the state estimation problem when the system (\ref{actual colored noise}), i.e., $(A_e, B_e, C_e)$ are unknown. Given the output data $y_k$, we want to construct an innovations model  for the unknown stochastic input $u_k$, such that the output statistics of the innovations model and  system (\ref{actual colored noise}) are the same. Given such a realization of the unknown input, we apply the standard Kalman filter for state estimation, augmented with the unknown input states.

\section{AR based Unknown Input Realization Technique}\label{section 3}
In this section, we propose an AR based unknown input realization technique which can construct an innovations model of the unknown inputs such that the ASKF can be applied for state estimation. First, a least squares problem is formulated based on the relationship between the inputs and outputs to recover the statistics of the unknown inputs. Then an AR model is constructed using the recovered input statistics, and a balanced realization model is then constructed using the ERA.

\subsection{Extraction of Input Autocorrelations via a Least Squares Problem} \label{section 3A}
Consider system  (\ref{original system}) with zero initial conditions, the output $y_k$ can be written as:
\begin{eqnarray}
y_k = \sum_{i=1}^{\infty} h_i u_{k-i} + v_k.
\end{eqnarray}

For a linear time-invariant (LTI) system, under assumption A1 that $A$ is stable, the output $\{y_k\}$ is a wide-sense stationary process when \{$u_k$\} is wide-sense stationary. From the definition of the autocorrelation function of a WSS process, the output autocorrelation  can be written as:
\begin{eqnarray}
R_{yy}(m) = E[y_k y_{k+m}^*] \nonumber \\
= \sum_{i=1}^{\infty} \sum_{j=1}^{\infty} h_i u_{k-i} u_{k+m-j}^* h_j^* + R_{vv}(m) \nonumber \\
=  \sum_{i=1}^{\infty} \sum_{j=1}^{\infty} h_i R_{uu}(m+i-j) h_j^* + R_{vv}(m),
\end{eqnarray}
where $m = 0, \pm 1, \pm 2, \cdots$ is the time-lag  between $y_k$ and $y_{k+m}$. Here, assumption A3  is used.

Notice that $R_{yy}(-m) \neq R_{yy}(m)$ when $\{y_k\}$  is a sequence of complex valued vectors. We denote $\hat{R}_{yy}(m) = R_{yy}(m) - R_{vv}(m)$, where $R_{vv}(m) = \Omega$ for $m=0$, and $R_{vv}(m) = 0$, otherwise. Therefore, the relationship between input  and output autocorrelation function is given by:
\begin{eqnarray}\label{out-in}
\hat{R}_{yy}(m) =  \sum_{i=1}^{\infty} \sum_{j=1}^{\infty} h_i R_{uu}(m+i-j) h_j^*.
\end{eqnarray}

For multiple input multiple output (MIMO) systems, $h_i$, $\hat{R}_{yy}(m)$, $R_{uu}(m)$ are matrices. To solve for the unknown input autocorrelations $R_{uu}(m)$, first we need to use a theorem from linear matrix equations \cite{direct, kronecker}. 
\begin{Theorem} \label{matrix equation}
Consider the matrix equation 
\begin{eqnarray} \label{matrix form}
AXB = C,
\end{eqnarray}
where $A$, $B$, $C$, $X$ are all matrices. If  $A \in \mathbb{C}^{m \times n} = (a_1, a_2, \cdots, a_n)$, where $a_i$ are the columns of $A$, then define $\text{vec}(A) \in \mathbb{C}^{mn \times 1}$ as: $\text{vec}(A) = \begin{pmatrix} a_1 \\ a_2 \\ \vdots \\ a_n \end{pmatrix}$.

The matrix equation (\ref{matrix form}) can be transformed into one vector equation:
\begin{eqnarray}\label{vector equation}
(B^T \otimes A) \text{vec}(X)= \text{vec}(C),
\end{eqnarray}
where $B^T \otimes A $ is the Kronecker product of $B^T $ and $A$. If $A$ is an $m \times n$ matrix and $B$ is a $p \times q$ matrix, then the Kronecker product $A \otimes B$ is the $mp \times nq$ block matrix:
\begin{eqnarray}
A \otimes B = \begin{pmatrix} a_{11} B &  a_{12} B & \cdots & a_{1n} B \\ \vdots & \vdots & \cdots & \vdots \\ a_{m1} B & a_{m2} B & \cdots & a_{mn}B \end{pmatrix}.
\end{eqnarray}
\end{Theorem}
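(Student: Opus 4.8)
The plan is to establish the stronger operator identity
$\text{vec}(AXB) = (B^T \otimes A)\,\text{vec}(X)$
for \emph{every} triple of conformable matrices $A \in \mathbb{C}^{m\times n}$, $X \in \mathbb{C}^{n\times p}$, $B \in \mathbb{C}^{p\times q}$, and then derive (\ref{vector equation}) by observing that $M \mapsto \text{vec}(M)$ is a linear bijection $\mathbb{C}^{m\times q} \to \mathbb{C}^{mq\times 1}$, so that $AXB = C$ holds if and only if $\text{vec}(AXB) = \text{vec}(C)$, i.e. if and only if $(B^T \otimes A)\,\text{vec}(X) = \text{vec}(C)$.

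First I would isolate the rank-one building block: for column vectors $a \in \mathbb{C}^m$ and $b \in \mathbb{C}^q$, the $k$-th column of $a b^T$ is $b_k a$, so stacking the columns gives $\text{vec}(a b^T) = b \otimes a$ directly from the definition of $\text{vec}$ and of the Kronecker product. Next I would expand $X = \sum_{i=1}^n \sum_{l=1}^p x_{il}\, e_i e_l^T$ in the standard basis (here $e_i \in \mathbb{C}^n$, $e_l \in \mathbb{C}^p$), so that by bilinearity of the product $AXB = \sum_{i,l} x_{il}\, (A e_i)(e_l^T B) = \sum_{i,l} x_{il}\, (A e_i)(B^T e_l)^T$. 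Applying the rank-one identity termwise and then the mixed-product property $(B^T e_l)\otimes(A e_i) = (B^T \otimes A)(e_l \otimes e_i)$ yields $\text{vec}(AXB) = \sum_{i,l} x_{il}\,(B^T \otimes A)(e_l \otimes e_i) = (B^T \otimes A)\sum_{i,l} x_{il}(e_l \otimes e_i) = (B^T \otimes A)\,\text{vec}(X)$, since $\sum_{i,l} x_{il}(e_l\otimes e_i) = \text{vec}(X)$.

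An alternative route, which I find cleaner because it makes the appearance of $B^T$ (rather than $B$) transparent and avoids invoking the mixed-product property, is to compute column by column: the $k$-th column of $AXB$ equals $A X (B e_k) = \sum_{l=1}^p B_{lk}\, A x_{\cdot l}$, where $x_{\cdot l}$ denotes the $l$-th column of $X$. Hence the $k$-th block of $\text{vec}(AXB)$ is $\big( B_{1k}A\ \ B_{2k}A\ \cdots\ B_{pk}A \big)\,\text{vec}(X)$, and assembling the blocks over $k=1,\dots,q$ produces exactly the matrix whose $(k,l)$ block is $B_{lk}A = (B^T)_{kl}\,A$, which is $B^T \otimes A$ by definition.

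I do not expect a genuine obstacle here: this is a standard property of the Kronecker product, and the only care required is consistent index bookkeeping and keeping straight that the factor transposed is $B$, not $A$ — precisely the point the column-wise computation settles cleanly. The final step, passing from the matrix equation to the vector equation via injectivity of $\text{vec}$, is immediate.
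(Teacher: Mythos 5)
Your proof is correct. Note, however, that the paper itself offers no proof of this theorem: it is stated as a known result on linear matrix equations and simply cited from the references, so there is no in-paper argument to compare yours against. Of your two routes, the first (rank-one decomposition $X=\sum_{i,l}x_{il}e_ie_l^T$ plus the mixed-product property) is standard but leans on the mixed-product identity $(B^Te_l)\otimes(Ae_i)=(B^T\otimes A)(e_l\otimes e_i)$, which is itself an unproven lemma of comparable depth to the target statement; the second, column-by-column computation is fully self-contained, correctly identifies the $(k,l)$ block of the assembled matrix as $(B^T)_{kl}A$, and is the one I would keep. The final passage from $\mathrm{vec}(AXB)=(B^T\otimes A)\mathrm{vec}(X)$ to the equivalence of $AXB=C$ with the vector equation, via bijectivity of $\mathrm{vec}$, is exactly right. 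One minor bookkeeping point worth stating explicitly if this were written out in full: with $A\in\mathbb{C}^{m\times n}$ and $B\in\mathbb{C}^{p\times q}$, the product $B^T\otimes A$ is $qm\times pn$ and $\mathrm{vec}(X)\in\mathbb{C}^{pn\times 1}$, so the dimensions conform with $\mathrm{vec}(C)\in\mathbb{C}^{qm\times 1}$ --- consistent with how the paper later uses the identity in its equation for $\mathrm{vec}(\hat{R}_{yy}(m))$.
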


By applying Theorem \ref{matrix equation}, (\ref{out-in}) can be written as:
\begin{eqnarray}\label{re_yuinf}
\underbrace{\text{vec} (\hat{R}_{yy} (m))}_{\in R^{q^2 \times 1}} =\sum_{i=1}^{\infty} \sum_{j=1}^{\infty}\underbrace{  \bar{h}_j \otimes h_i}_{\in R^{q^2 \times p^2}} \underbrace{\text{vec} (R_{uu}(m+i-j))}_{\in R^{p^2\times 1}},
\end{eqnarray}
where $\bar{h}_i$ denotes the matrix $h_i$ with complex conjugated entries, and $h_i^* = (\bar{h}_i)^T$.

Now, we estimate the unknown input autocorrelations by the following procedure. 
\subsubsection{Choose design parameter $M$}
Under assumption A1, i.e., the system is stable, the Markov parameters of the system (\ref{original system})  have the following property: $\| h_i \| \rightarrow 0$ as $i \rightarrow \infty$. 

We choose a design parameter $M$, such that (\ref{re_yuinf}) can be written as:
\begin{eqnarray}\label{re_yu}
\text{vec} (\hat{R}_{yy} (m)) =
\sum_{i=1}^{M} \sum_{j=1}^{M} \bar{h}_j \otimes h_i \text{vec} (R_{uu}(m+i-j)).
\end{eqnarray}
where $M$  varies with different systems and can be chosen as large as desired. 
\subsubsection{Choose design parameters $N_o$, $N_i$}
Under assumption A1 and A4, $\| R_{uu} (m) \| \rightarrow 0$, and $\| \hat{R}_{yy} (m) \|  \rightarrow 0$ as $m \rightarrow \infty$. As a standard method when computing a power spectrum from an autocorrelation function, we choose design parameters $N_i$ and $N_o$, such that  the input autocorrelations are calculated when $|m| \leq N_i$, and the output autocorrelations are calculated when $|m| \leq N_o$.  
The numbers $N_o$  and $N_i$  depend on the dynamic system and unknown inputs, and can be chosen as large as required. We have the following proposition.
\begin{proposition}\label{P1}
The relation $N_i \leq N_o$ holds, which implies that all significant input autocorrelations can be recovered from the output autocorrelations. 
\end{proposition}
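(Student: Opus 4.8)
The plan is to read off from the truncated identity (\ref{re_yu}) exactly which input lags influence a given output lag, and then combine this with the decay of $R_{uu}$ and of the Markov parameters $h_i$ that was established just above the proposition.

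First I would observe that in (\ref{re_yu}) the vector $\text{vec}(\hat{R}_{yy}(m))$ is a linear combination of the vectors $\text{vec}(R_{uu}(m+i-j))$ with $1 \leq i,j \leq M$, so it depends only on the input autocorrelations at lags $k=m+i-j$ lying in the band $m-(M-1) \leq k \leq m+(M-1)$. Next, using Assumptions A1 and A4 exactly as in the paragraph preceding the proposition, I would fix $N_i$ so that $\|R_{uu}(k)\|$ is negligible for $|k| > N_i$ (these being, by definition, the significant input autocorrelations) and recall that $M$ is chosen so that $\|h_i\|$ is negligible for $i > M$. Then for every lag with $|m| > N_i + (M-1)$, each index $k=m+i-j$ occurring in (\ref{re_yu}) satisfies $|k| \geq |m| - (M-1) > N_i$, so every summand is negligible and hence $\hat{R}_{yy}(m)$ is negligible as well. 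Therefore it is enough to compute the output autocorrelations for $|m| \leq N_o$ with the natural choice $N_o := N_i + (M-1)$; since $M \geq 1$ (indeed $M \geq 2$, because Assumption A2 forces $h_2 = CAB$ to be among the retained parameters) this yields $N_o \geq N_i$, i.e. $N_i \leq N_o$.

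For the second assertion I would argue that, after discarding the negligible tails, (\ref{re_yu}) evaluated at $m=-N_o,\ldots,N_o$ is a finite linear system whose unknowns are precisely the $\text{vec}(R_{uu}(k))$ with $|k|\leq N_i$; because $N_i \leq N_o$ and $p \leq q$, this system has at least as many block rows (of size $q^2$) as block columns (of size $p^2$), so the significant input autocorrelations appear as the least squares solution of an overdetermined system. The one genuinely delicate point — which I expect to be the main obstacle — is that ``can be recovered'' requires the coefficient matrix assembled from the blocks $\bar{h}_j \otimes h_i$ to have full column rank. I would establish this using Assumption A2: since $\text{rank}(CAB)=p$, the block $\bar{h}_2 \otimes h_2$ has rank $p^2$ and is thus of full column rank in $\mathbb{C}^{q^2 \times p^2}$, and this, together with the banded (in $m$) structure of the equations, forces injectivity of the map from the significant lags of $R_{uu}$ to those of $\hat{R}_{yy}$. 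The remaining steps are routine index bookkeeping and a dimension count.
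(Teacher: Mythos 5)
There is a genuine gap: your main argument establishes an inequality in the wrong direction. In the first paragraph you show that for $|m| > N_i + (M-1)$ every term of (\ref{re_yu}) involves an input lag $|k| > N_i$ and hence $\hat{R}_{yy}(m)$ is negligible. That is an \emph{upper} bound on the support of the output autocorrelation, $N_o \leq N_i + M - 1$, whereas the proposition asserts a \emph{lower} bound, $N_o \geq N_i$, for the $N_o$ that the paper actually uses, namely the lag beyond which $\hat{R}_{yy}$ is negligible. You convert your upper bound into the desired inequality only by redefining $N_o := N_i + (M-1)$, which makes $N_i \leq N_o$ true by fiat but no longer says anything about the intrinsically defined output support. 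In particular, your argument does not exclude the scenario the proposition is designed to rule out: that some significant input autocorrelation at a lag $|k| \leq N_i$ leaves no trace in the output autocorrelations at the lags where they are non-negligible, in which case it could not be recovered from them.

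The paper's proof runs the implication the other way, by contradiction: since the support of $\hat{R}_{yy}$ is limited to $(-N_o, N_o)$ one has $\hat{R}_{yy}(N_o+1)=0$; if $N_i > N_o$ then $R_{uu}(N_o+1)$ is not negligible, and expanding (\ref{re_yuinf}) at lag $N_o+1$ gives
\begin{equation}
\text{vec}(\hat{R}_{yy}(N_o+1)) = \sum_{i=1}^{\infty} \bar{h}_i \otimes h_i \, \text{vec}(R_{uu}(N_o+1)) + \sum_{i=2}^{\infty} \bar{h}_{i-1} \otimes h_i \, \text{vec}(R_{uu}(N_o)) + \cdots,
\end{equation}
whose leading coefficient $\sum_i \bar{h}_i \otimes h_i$ is nondegenerate (full column rank by A2, via $\operatorname{rank}(CAB)=p$, exactly the ingredient you invoke in your second paragraph for a different purpose), so $\hat{R}_{yy}(N_o+1)$ would not be negligible --- a contradiction. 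You in fact have all the pieces: if you take the rank observation from your second paragraph and apply it to the lag $N_o+1$ under the hypothesis $N_i > N_o$, you recover the paper's argument. As written, however, the first paragraph proves the converse bound and the second paragraph addresses recoverability of the least-squares solution, which is the content of Proposition \ref{P2}, not of this proposition.
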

\begin{proof}
The support of $\hat{R}_{yy}$ is limited to $(-N_o, N_o)$, thus, we have: 
$\hat{R}_{yy}(N_o+1) = 0. $
From (\ref{re_yuinf}),
\begin{eqnarray}
\text{vec}(\hat{R}_{yy}(N_o+1))= \sum_{i=1}^{\infty} \bar{h}_i \otimes h_i \text{vec}( R_{uu}(N_o+1)) \nonumber \\
+ \sum_{i=2}^{{\infty}}\bar{h}_{i-1} \otimes h_i \text{vec}(R_{uu}(N_o))+ \cdots.
\end{eqnarray}
If $N_i > N_o$, which means $R_{uu}(N_o + 1) \neq 0$, then it follows that $R_{yy}(N_o + 1)$ is also not negligible, which contradicts the assumption, and hence, as a consequence, $N_i \leq N_o$.
\end{proof}

Thus, the following equation is used for computation of the unknown input autocorrelations.
\begin{eqnarray}\label{re_yu_cut}
\text{vec} (\hat{R}_{yy} (m)) =
\sum_{i=1}^{M} \sum_{j=1}^{M} \bar{h}_j \otimes h_i \text{vec} \underbrace{(R_{uu}(m+i-j))}_{| m + i - j | \leq N_i}, \nonumber \\
|m| \leq N_o
\end{eqnarray}

\subsubsection{Solve the least squares problem}
We collect $2 N_o + 1$ output autocorrelations, and from the above assumptions, there are $2N_i + 1$ unknown input autocorrelations:
\begin{eqnarray}\label{autocoef}
\underbrace{ \begin{pmatrix} \text{vec}(\hat{R}_{yy}(-N_o)) \\\text{vec}(\hat{R}_{yy}(-N_o+1 )) \\ \vdots \\ \text{vec}(\hat{R}_{yy}(0)) \\ \text{vec}\hat{R}_{yy}(1)) \\ \vdots \\ \text{vec}(\hat{R}_{yy}(N_o)) \end{pmatrix}}_{\text{vec}(\hat{R}_{yy})} = C_{yu} \underbrace{ \begin{pmatrix} \text{vec}(R_{uu}(-N_i)) \\\text{vec}(R_{uu}(-N_i+1)) \\ \vdots  \\ \text{vec}(R_{uu}(0)) \\ \text{vec}(R_{uu}(1)) \\ \vdots   \\ \text{vec}(R_{uu}(N_i)) \end{pmatrix}}_{\text{vec}(R_{uu})},
\end{eqnarray}
where $C_{yu}$ is the coefficient matrix and can be calculated from (\ref{re_yu_cut}). 

Under assumption A1, A2 and A4, we have the following proposition.
\begin{proposition}\label{P2}
Equation (\ref{autocoef}) has a unique least squares solution $\hat{R}_{uu}(m),  m = \pm 1, \pm 2, \cdots, \pm N_i$ .
\end{proposition}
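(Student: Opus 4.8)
The claim is that the overdetermined system (\ref{autocoef}), $\text{vec}(\hat R_{yy}) = C_{yu}\,\text{vec}(R_{uu})$, admits a unique least squares solution, and this is equivalent to $C_{yu}$ having full column rank, i.e. to the homogeneous system $C_{yu}\,v = 0$ having only the trivial solution $v=0$. So the plan is to assume a family $\{R_{uu}(\ell)\}_{|\ell|\le N_i}$ is sent to the zero vector by (\ref{re_yu_cut}) and to deduce that every $R_{uu}(\ell)$ vanishes, using A1, A2 and A4.

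I would package such a hypothetical null vector as the matrix Laurent polynomial $\Phi_u(z) = \sum_{\ell=-N_i}^{N_i} R_{uu}(\ell)\,z^{-\ell}$ and introduce the truncated transfer matrices $G_M(z) = \sum_{i=1}^{M} h_i z^{i}$ and $\widetilde G_M(z) = \sum_{j=1}^{M} h_j^{*}\, z^{-j}$. Using the vec identity behind Theorem \ref{matrix equation}, namely $(\bar h_j\otimes h_i)\,\text{vec}(X)=\text{vec}(h_i X h_j^{*})$, equation (\ref{re_yu_cut}) says precisely that $\hat R_{yy}(m)$ is the coefficient of $z^{-m}$ in the matrix Laurent polynomial $G_M(z)\,\Phi_u(z)\,\widetilde G_M(z)$. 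That product is supported on lags $|m|\le M+N_i-1$; since $N_o$ is chosen at least this large (physically, the correlation length of $y$ is the sum of the correlation length of $u$ and the system memory $M$), the hypothesis that $\hat R_{yy}(m)=0$ for all $|m|\le N_o$ forces $G_M(z)\,\Phi_u(z)\,\widetilde G_M(z)\equiv 0$.

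The key step, where A2 enters, is that $G_M(z)$ has full column rank $p$ and $\widetilde G_M(z)$ has full row rank $p$ for all but finitely many $z$. Indeed, if $G_M(z)\,u\equiv 0$ for a fixed vector $u$, then $h_i u = 0$ for $i=1,\dots,M$; with $M\ge 2$ this includes $h_2 u = (CAB)u = 0$, and since $\mathrm{rank}(CAB)=p$ forces the $q\times p$ matrix $CAB$ to have trivial kernel, $u=0$. Hence the columns of $G_M(z)$ are linearly independent over the field of rational functions, so $G_M(z)$ drops rank only on a finite set; the analogous statement for $\widetilde G_M(z)$ follows by taking conjugate transposes, since $\widetilde G_M(z)^{*} = G_M(\bar z^{-1})$. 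Now evaluate $G_M(z)\,\Phi_u(z)\,\widetilde G_M(z)=0$ at any $z$ outside this finite set, left-multiply by a left inverse of $G_M(z)$ and right-multiply by a right inverse of $\widetilde G_M(z)$; this gives $\Phi_u(z)=0$ there, and a matrix Laurent polynomial vanishing off a finite set is identically zero, so every $R_{uu}(\ell)=0$. Assumptions A1 and A4 are used only to guarantee that $y$ is WSS and that the input autocorrelations are compactly supported, so that (\ref{autocoef}) is the correct finite model.

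The main obstacle is the bookkeeping that ties the finite system (\ref{autocoef}), truncated both at $M$ Markov parameters and at $N_o$ output lags, to the clean polynomial identity above, i.e. arguing that these truncations create no spurious null vector. The generic full-rank step is robust, needing only $M\ge 2$ and A2 rather than large $M$, so the delicate point is the size relation between $N_o$, $N_i$ and $M$. If one prefers not to assume $N_o\ge M+N_i-1$, the same conclusion can be reached in finite-dimensional form: a sufficiently tall block-Toeplitz matrix whose generating symbol has full column rank is itself of full column rank, which is exactly the finite shadow of the argument above.
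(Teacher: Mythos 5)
Your reduction of the claim to ``$C_{yu}$ has full column rank'' matches the paper's, and your use of A2 is the right mechanism in a different guise: you use $\mathrm{rank}(CAB)=p$ to show $G_M(z)=\sum_{i=1}^M h_iz^i$ has trivial kernel over the rational functions (hence full column rank off a finite set), whereas the paper uses $\mathrm{rank}(CAB)=p$ via the factorization $CAB=CV_{co}\Lambda_{co}U_{co}'B$ to show that the diagonal block $C_m(0)=\sum_j\bar h_j\otimes h_j$ has rank $p^2$, and then runs an induction on the central square block-Toeplitz submatrix $C_m$ (the rows $|m|\le N_i$ of $C_{yu}$). So the routes are genuinely different: yours is a symbol/generic-rank argument, the paper's an induction on one specific square submatrix whose existence is guaranteed by Proposition \ref{P1} ($N_i\le N_o$).

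The gap is the one you flag but do not close. Passing from ``$\hat R_{yy}(m)=0$ for $|m|\le N_o$'' to the polynomial identity $G_M(z)\Phi_u(z)\tilde G_M(z)\equiv 0$ requires $N_o\ge M+N_i-1$; the paper never assumes this, Proposition \ref{P1} gives only $N_i\le N_o$, and the paper's own heat-equation example has $M=4000$, $N_i=200$, $N_o=2000$, so $N_o\ll M+N_i-1$. Your fallback --- that a sufficiently tall block-Toeplitz matrix with a generically full-column-rank symbol is full column rank --- is not a theorem until ``sufficiently tall'' is quantified, and the quantification is exactly the point at issue: retaining only the output lags $|m|\le N_o$ of a convolution whose symbol has bandwidth $M-1$ can create null vectors. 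A scalar illustration: with symbol $z^{d}-z^{-d}$ (nonzero off a finite set) and $d>N_o+N_i$, the variable $v_{N_i}$ appears in no retained row at all, so the truncated Toeplitz matrix is rank-deficient even though the full convolution map is injective; and $d\le M-1$ is compatible with the paper's parameter regime. What rules this out for the actual $C_{yu}$ is not tallness but the special structure of the central rows, whose diagonal blocks are the Gram-like map $X\mapsto\sum_j h_jXh_j^*$ --- precisely the structure the paper's induction exploits. To repair your argument you must either add the hypothesis $N_o\ge M+N_i-1$ (changing the proposition) or prove full column rank of the central $(2N_i+1)\times(2N_i+1)$ block submatrix directly, which is what the paper does (itself only in outline, since its inductive step is asserted rather than proved).
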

\begin{proof}
We partition the matrix $C_{yu}$ into three parts as $C_{yu} = \begin{pmatrix} C_t \\ C_m \\ C_b \end{pmatrix},$
where $C_{m}$ contains the $q^2(N_o - N_i) + 1, \cdots, q^2(N_o + N_i + 1)$ rows of $C_{yu}$ and can be expressed as: {\tiny\begin{eqnarray}
C_{m} = \begin{pmatrix} \displaystyle \sum_{j = 1}^M \bar{h}_j \otimes h_j & \displaystyle \sum_{j = 1}^{M-1} \bar{h}_j \otimes h_{j+1} & \cdots & \cdots \\ \displaystyle \sum_{j=  1}^{M-1} \bar{h}_{j+1} \otimes h_j & \displaystyle   \displaystyle \sum_{j=1}^M \bar{h}_j \otimes h_j & \cdots &  \cdots \\ \cdots & \cdots & \ddots & \cdots \\ \cdots & \cdots & \cdots &  \displaystyle \sum_{j=1}^M \bar{h}_j \otimes h_j \end{pmatrix}.
\end{eqnarray}}
In the following, we prove that $C_{m} \in \mathbb{C}^{q^2(2N_i + 1) \times p^2(2N_i+1)}$ has full column rank $p^2 (2N_i + 1)$ by induction. 

Let $N_i = 0$, then
\begin{eqnarray}
C_{m}(0) = \sum_{j= 1}^M \bar{h}_j \otimes h_j = (C V_{co} \otimes CV_{co}) 
(I + \Lambda_{co} \otimes \Lambda_{co} \nonumber \\
+ \cdots + \Lambda_{co}^{M-1} \otimes \Lambda_{co}^{M-1}) (U_{co}' B \otimes U_{co}'B),
\end{eqnarray}
where $\Lambda_{co}$ are the controllable and observable eigenvalues of $A$, and $(V_{co}, U_{co})$ are the corresponding right and left eigenvectors. Under the assumption A2, if rank $(CAB) = p$, and since 
$CAB = CV_{co} \Lambda_{co} U_{co}' B$, which implies that rank $(C_{m}(0)) = p^2$.

If rank $C_{m}( N_i -1 )$ has rank $p^2 (2N_i - 1)$, then consider $C_{m} ( N_i)$: 
\begin{eqnarray}
C_{m} (N_i) = \begin{pmatrix} C_{m}(0) & C_{12}& C_{13} \\ C_{21} & C_{m}(N_i - 1) & C_{23}\\ C_{31} & C_{32} & C_{m}(0) \end{pmatrix},
\end{eqnarray}
where $C_{12}, C_{13}, C_{21}, C_{23}, C_{31}, C_{32}$ are some matrices, and it can be proved that $C_{m}(N_i)$ has $p^2 + p^2( 2N_i -1) + p^2 = p^2(2N_i + 1)$ independent columns, and hence, rank $(C_{m}(N_i)) = p^2(2N_i + 1)$. 

Thus, by induction,  $C_{m}$ has full column rank, and hence, $C_{yu}$ has full column rank. Since $q \geq p$, it is an overdetermined system, so there exists a unique solution to the least squares problem.
\end{proof} 

\begin{remark}
The size of $C_{yu}$ is $q^2 (2N_o+1) \times p^2 (2N_i + 1)$ and it would be large when $p$ and $q$ increase, and hence, large scale least squares  problem needs to be solved for systems with large number of inputs/outputs. For example, a modified conjugate gradients method \cite{conjugate}  could be used as follows.

The least squares problem need to be solved is:
\begin{eqnarray}
\text{vec} (\hat{R}_{yy} )= C_{yu} \text{vec} (R_{uu}),
\end{eqnarray}
and multiply $C_{yu}^*$ on both sides:
\begin{eqnarray}
C_{yu}^* \text{vec} (\hat{R}_{yy})= C_{yu}^* C_{yu} \text{vec} (R_{uu}). 
\end{eqnarray}
If we denote $ L_s = C_{yu}^* \text{vec} (\hat{R}_{yy})$, $\bar{x} = \text{vec} (R_{uu})$, and $C_s = C_{yu}^*C_{yu}$, then $C_s = C_s^*$, and the problem is equivalent to solve the least squares problem for $\bar{x}$:
\begin{eqnarray}
C_s \bar{x}= L_s,
\end{eqnarray}
and a conjugate gradient method to solve this problem is summarized in Algorithm \ref {con_grad}.

\begin{algorithm}[!tb] 
\begin{enumerate}
\item For a least squares problem $C_s \bar{x} = L_s$, where $C_s = C_s^*$, $\bar{x}$ is unknown.
\item Start with a randomly initial solution $\bar{x}_0$.
\item $r_0 = L_s - C_s \bar{x}_0$, $p_0 = r_0$.
\item for $k = 0$, repeat
\item $\alpha_k = \frac{r_k^* r_k}{p_k^* C_s p_k} $,\\
$\bar{x}_{k+1} = \bar{x}_k + \alpha_k p_k$, \\
$r_{k+1} = r_{k} - \alpha_k C_s p_k $,\\
if  $r_{k+1}$ is sufficient small then exit loop. \\
$\beta_k = \frac{r_{k+1}^* r_{k+1} }{r_k^* r_k} $,\\
$p_{k+1} = r_{k+1} + \beta_k p_k $,\\
$ k = k+1$, \\
end repeat.
\item The optimal estimation  is $x_{k+1}$.
\end{enumerate}
\caption{Conjugate gradient algorithm}\label{con_grad}
\end{algorithm}
\end{remark}

Denote $R_{uu}(m)$ as the ``true" input autocorrelations, and $\Delta(m) = R_{uu}(m) - \hat{R}_{uu}(m)$ as the error of the input autocorrelations we extract, $\Delta (m)$ results from two design parameters: the choice of $M$ and $N_i$. We analyze the errors seperately, in the following.

\begin{proposition}\label{P3}
Denote $R_{uu}^M(m)$ as the input autocorrelations we extract by using $M$ Markov parameters of the dynamic system. We assume that $\| h_i \| \leq \delta , i > M$, where $\delta$ is small enough. The error of input autocorrelations  is: $\| \Delta_M (m) \|  \leq k_M \delta$, where $k_M$ is some constant.
\end{proposition}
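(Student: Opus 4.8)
The plan is to read Proposition \ref{P3} as a perturbation estimate for the overdetermined least squares problem (\ref{autocoef}): truncating the double sum in (\ref{re_yuinf}) at $M$ Markov parameters replaces the exact coefficient matrix by a nearby one, and the task is to propagate that perturbation into the least squares solution. Throughout I would hold $N_i$ and $N_o$ fixed and suppose the true autocorrelations $R_{uu}(m)$ satisfy (\ref{out-in}) exactly, i.e. ignore the error caused by the $|m|>N_i$ tail, which is the separate error term analyzed afterwards. Write $C_{yu}^{\infty}$ for the coefficient matrix assembled from (\ref{re_yuinf})--(\ref{autocoef}) using all $h_i$, and $C_{yu}^{M}$ for the one assembled from (\ref{re_yu_cut}) using only $h_1,\dots,h_M$, so that the data obey $\text{vec}(\hat R_{yy}) = C_{yu}^{\infty}\,\text{vec}(R_{uu})$ while the extracted quantities are the least squares solution $\text{vec}(R_{uu}^{M}) = (C_{yu}^{M})^{+}\,\text{vec}(\hat R_{yy})$, with $(\cdot)^{+}$ the Moore--Penrose pseudo-inverse.

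The first step is to bound the coefficient perturbation $E_M := C_{yu}^{\infty}-C_{yu}^{M}$. By (\ref{re_yuinf}), every block of $C_{yu}^{\infty}$ is a sum of Kronecker terms $\bar h_j\otimes h_i$ over index pairs with a fixed value of $i-j$ and $i,j\geq 1$, and the corresponding block of $C_{yu}^{M}$ restricts this sum to $1\leq i,j\leq M$; hence every block of $E_M$ collects exactly the terms with $\max(i,j)>M$. Using $\|\bar h_j\otimes h_i\| = \|h_i\|\,\|h_j\|$, the hypothesis $\|h_i\|\leq\delta$ for $i>M$, a uniform bound $\beta:=\sup_i\|h_i\|<\infty$ (finite since $\|h_i\|\to 0$ under A1), and the absolute summability $S:=\sum_{i\geq 1}\|h_i\|<\infty$ (which holds because $A$ is stable and hence $\|h_i\|$ decays geometrically), each block has norm at most $(2M\beta+S)\delta$; since there are only $(2N_o+1)(2N_i+1)$ blocks, this gives $\|E_M\|\leq k_M^{(1)}\,\delta$ for a constant $k_M^{(1)}$ depending only on $M$, $N_i$, $N_o$ and the system.

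The second step propagates the bound. By Proposition \ref{P2}, $C_{yu}^{M}$ has full column rank, so $(C_{yu}^{M})^{+}C_{yu}^{M}=I$; substituting $\text{vec}(\hat R_{yy}) = (C_{yu}^{M}+E_M)\,\text{vec}(R_{uu})$ into $\text{vec}(R_{uu}^{M}) = (C_{yu}^{M})^{+}\,\text{vec}(\hat R_{yy})$ yields $\text{vec}(R_{uu}^{M}) = \text{vec}(R_{uu}) + (C_{yu}^{M})^{+}E_M\,\text{vec}(R_{uu})$, hence the stacked error is $\text{vec}(\Delta_M) = -(C_{yu}^{M})^{+}E_M\,\text{vec}(R_{uu})$. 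Taking norms, $\|\Delta_M(m)\|\leq\|\text{vec}(\Delta_M)\|\leq \|(C_{yu}^{M})^{+}\|\,\|E_M\|\,\|\text{vec}(R_{uu})\| \leq k_M\,\delta$ with $k_M := k_M^{(1)}\,\|(C_{yu}^{M})^{+}\|\,\|\text{vec}(R_{uu})\|$, which is the assertion.

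The delicate point is the estimate on $\|E_M\|$, and in particular the contribution of the terms $\bar h_j\otimes h_i$ with both $i,j>M$: there are infinitely many of them, so they cannot be controlled by $\delta$ alone, and one must invoke the geometric decay of the Markov parameters guaranteed by A1 to get absolute summability. The rest --- the algebraic identity for $\text{vec}(\Delta_M)$, the appeal to Proposition \ref{P2} for full column rank, and the final norm bookkeeping --- is routine.
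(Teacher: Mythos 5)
Your proof is correct and follows essentially the same route as the paper's: bound the perturbation induced by truncating the Markov parameters at index $M$, then push that perturbation through the full-column-rank least squares system via the (pseudo-)inverse of $C_{yu}^M$. The only differences are cosmetic --- you bound the coefficient-matrix perturbation $E_M$ and then multiply by $\text{vec}(R_{uu})$, whereas the paper directly bounds the resulting data perturbation $\Delta_2 = (C_{yu}-C_{yu}^M)\,\text{vec}(R_{uu})$; you are also somewhat more careful than the paper about the summability of the doubly-infinite tail $i,j>M$ and about writing $(C_{yu}^M)^{+}$ rather than $(C_{yu}^M)^{-1}$ for a rectangular matrix.
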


The Perturbation theory \cite{eigp} is used to prove the above result, and the proof is shown in Appendix \ref{AP1}. 

\begin{remark}
Error analysis in the Fourier domain. 

The power spetral density is defined as: 
\begin{eqnarray}
S_{uu}( \omega) = \sum_{k = -\infty}^{ \infty} R_{uu}(k) e^{- j k \omega}, \\
S_{yy}( \omega) = \sum_{k = -\infty}^{\infty} \hat{R}_{yy}(k) e^{- j k \omega},
\end{eqnarray}
Thus, by substituting (\ref{out-in}), the relationship between the output power spectral density and input power spectral density is:
\begin{eqnarray}
S_{yy}( \omega) = \sum_{k = -\infty}^{\infty} (\sum_{i=1}^{\infty} \sum_{t=1}^{\infty} h_i R_{uu}(k + i -t) h_t^*) e^{- j k \omega}\nonumber \\
= \sum_{k = -\infty}^{\infty} (\sum_{i=1}^{M} \sum_{t=1}^{M} h_i R_{uu}(k + i -t) h_t^*) e^{- j k \omega} + \Delta S_{M}(\omega) \nonumber \\
= S_{yy}^M (\omega) + \Delta S_{M}(\omega),
\end{eqnarray}
where 
\begin{eqnarray}
\Delta S_{M} (\omega) = \sum_{k= -\infty}^{\infty} (\hat{R}_{yy}(k) - \hat{R}_{yy}^M(k)) e^{-j k \omega} = \nonumber \\
 \sum_{k = -\infty}^{\infty} h_{M+1} R_{uu}(k) h_{M+1}^*  e^{-j k \omega} \nonumber \\
+ \sum_{k = -\infty}^{\infty} h_{M+1} R_{uu}(k) h_1^* e^{-j(k-M) \omega } + \cdots \nonumber \\
= h_{M+1} S_{uu}(\omega) h_{M+1}^* + h_{M+1} S_{uu}(\omega) e^{j M \omega} h_1^* + \cdots. 
\end{eqnarray}
Thus, $ \| \Delta S_{M} (\omega)\|  \leq k_1 \delta$, where $k_1$ is some constant. Hence, the truncation error by using $M$ Markov parameters  can be seen to be a small perturbation in the frequency domain. 
\end{remark}

\begin{proposition}\label{P4}
Denote $R_{uu}^N (m)$ as the input autocorrelations we extract under assumption $\| R_{uu}(m) \| \leq \delta, |m| > N_i$, and $\| \hat{R}_{yy} (m) \| \leq \delta, |m| > N_o$ where $\delta$ is small enough. The errors resulting from this assumption is $\| \Delta_N(m) \| \leq k_N \delta$, where $k_N$ is some constant.
\end{proposition}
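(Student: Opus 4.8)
The plan is to treat this as a perturbation of the least-squares system (\ref{autocoef}), in the same spirit as the argument behind Proposition \ref{P3}, except that the perturbation now sits in the right-hand side rather than in the coefficient matrix, which makes the present estimate more direct (no perturbation of $C_{yu}^{\dagger}$ itself is required). Fix $M$, so that the residual Markov-parameter truncation error has already been accounted for by Proposition \ref{P3}; then (\ref{re_yu}) may be regarded as an exact identity holding for every lag $m\in\mathbb{Z}$. First I would, for each of the rows $|m|\leq N_o$ retained in (\ref{autocoef}), split the double sum $\sum_{i=1}^{M}\sum_{j=1}^{M}\bar h_j\otimes h_i\,\text{vec}(R_{uu}(m+i-j))$ into the terms with $|m+i-j|\leq N_i$ --- which are precisely the columns assembled into $C_{yu}$, acting on the block of \emph{true} autocorrelations $\text{vec}(R_{uu}(-N_i)),\dots,\text{vec}(R_{uu}(N_i))$ ordered as in (\ref{autocoef}) --- and the ``tail'' terms with $|m+i-j|>N_i$. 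Stacking the tail contributions over $|m|\leq N_o$ into a vector $e$, this gives the exact identity
\begin{eqnarray}\label{pf4:id}
\text{vec}(\hat R_{yy}) &=& C_{yu}\,\text{vec}(R_{uu}) + e .
\end{eqnarray}

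Next I would bound $\|e\|$. Each stacked component of $e$ is a finite sum of terms $(\bar h_j\otimes h_i)\,\text{vec}(R_{uu}(m+i-j))$ with $1\leq i,j\leq M$ and $|m+i-j|>N_i$; by hypothesis $\|R_{uu}(m+i-j)\|\leq\delta$ for every such term, while $\|\bar h_j\otimes h_i\| = \|h_j\|\,\|h_i\|$ is bounded because $A$ is stable (A1). Summing the finitely many such terms over the $2N_o+1$ retained rows yields $\|e\|\leq k'\delta$, with $k'$ depending only on $M$, $N_o$ and the Markov parameters. The companion hypothesis $\|\hat R_{yy}(m)\|\leq\delta$ for $|m|>N_o$ plays the same role here as in the proof of Proposition \ref{P1}: it guarantees that the equations discarded when the rows are cut at $\pm N_o$ carry only information of size $O(\delta)$, so truncating the output lags is harmless.

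Finally I would invoke Proposition \ref{P2}: $C_{yu}$ has full column rank, hence $C_{yu}^{\dagger}C_{yu}=I$ and $\|C_{yu}^{\dagger}\|<\infty$, where $C_{yu}^{\dagger}=(C_{yu}^{*}C_{yu})^{-1}C_{yu}^{*}$. Since $\text{vec}(R_{uu}^{N})$ is the unique least-squares solution of $C_{yu}x=\text{vec}(\hat R_{yy})$, it equals $C_{yu}^{\dagger}\text{vec}(\hat R_{yy})$, whereas (\ref{pf4:id}) gives $\text{vec}(R_{uu})=C_{yu}^{\dagger}(\text{vec}(\hat R_{yy})-e)$. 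Subtracting, the stacked error $\Delta_N:=\text{vec}(R_{uu})-\text{vec}(R_{uu}^{N})$ equals $-C_{yu}^{\dagger}e$, and since each $\Delta_N(m)$ is a sub-block of $\Delta_N$,
\begin{eqnarray}
\|\Delta_N(m)\| &\leq& \|\Delta_N\| \leq \|C_{yu}^{\dagger}\|\,\|e\| \leq k_N\delta, \qquad k_N:=k'\,\|C_{yu}^{\dagger}\| .
\end{eqnarray}
The step I expect to be the main obstacle is the bookkeeping in the second paragraph --- identifying exactly which $R_{uu}$ terms are swept into $e$ and verifying that none of the columns kept in $C_{yu}$ accidentally absorbs a tail term --- together with the check that $k_N$ is genuinely finite, which is exactly where the \emph{full column rank} conclusion of Proposition \ref{P2} (as opposed to mere consistency of the system) is indispensable. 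Making $k_N$ explicit, and in particular tracking how it deteriorates as $N_i$ grows, would require a sharper bound on $\|C_{yu}^{\dagger}\|$, but this refinement is not needed for the statement as given.
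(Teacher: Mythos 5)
Your proposal is correct and follows essentially the same route as the paper: split the output--input convolution into the retained lags $|m+i-j|\leq N_i$ and a tail bounded by $k'\delta$ via the hypothesis on $R_{uu}$ and the summability of the Markov parameters, then push this right-hand-side perturbation through the least-squares solve using the full column rank of $C_{yu}$ from Proposition \ref{P2}. You are merely more explicit than the paper in two places --- working with the finite $M$-truncated sum (delegating that error to Proposition \ref{P3}) and writing out the pseudo-inverse step $\Delta_N=-C_{yu}^{\dagger}e$ that the paper compresses into ``following the same procedure as in Proposition \ref{P3}'' --- which is a presentational, not a substantive, difference.
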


The proof is shown in Appendix \ref{AP2}.

\begin{remark}
Error analysis in frequency domain:
\begin{eqnarray}
S_{yy}(\omega) = \sum_{k=-\infty}^{\infty} (\sum_{i=1}^{\infty} \sum_{t=1}^{\infty} h_i \underbrace{R_{uu}(k + i -t)}_{|k+i-t| \leq N_i} h_t^*) e^{-j k \omega} \nonumber \\
+  \sum_{k=-\infty}^{\infty} (\sum_{i=1}^{\infty} \sum_{t=1}^{\infty} h_i \underbrace{R_{uu}(k + i -t)}_{|k+i-t| > N_i} h_t^*) e^{-j k \omega} \nonumber \\
= S_{yy}^N(\omega)  + \Delta S_N(\omega),
\end{eqnarray}
where 
\begin{eqnarray}
\| \Delta S_N(\omega) \| \leq   \sum_{k=-\infty}^{\infty} (\sum_{i=1}^{\infty} \sum_{t=1}^{\infty} \|h_i\| \times  \delta \times \| h_t^*\|) e^{-j k \omega} \| \leq k_2 \delta, \nonumber
\end{eqnarray}
where $k_2$ is some constant.
\end{remark}

Under the assumptions A1-A4, the following proposition considers the total errors of input autocorrelations we recover.
\begin{proposition}\label{P5}
Denote $\hat{R}_{uu} (m)$ as the input autocorrelation function we estimate from the output autocorrelations, and let $\Delta (m) = R_{uu}(m) - \hat{R}_{uu} (m)$ be the error between the estimated input autocorrelation and the ``true" input autocorrelation.  We assume that $\| h_i \| \leq \delta , i > M$, $\| R_{uu}(m) \| \leq \delta, |m| > N_i$, and $\| \hat{R}_{yy} (m) \| \leq \delta, |m| > N_o$ where $\delta$ is small enough.
Then $\| \Delta (m) \| \leq k \delta$, where $k$ is some constant.
\end{proposition}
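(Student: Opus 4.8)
The plan is to get the bound by superposition of the two error sources already handled in Propositions \ref{P3} and \ref{P4}. For fixed design parameters $M$, $N_i$, $N_o$ the map that takes the output-autocorrelation data to the least-squares estimate $\text{vec}(\hat{R}_{uu})$ is a \emph{fixed linear} operator, so the total deviation from the true $R_{uu}$ splits additively into (i) the part caused by truncating the Markov-parameter series at $M$ and (ii) the part caused by truncating the correlation supports at $N_i$ and $N_o$, after which the triangle inequality closes the argument.

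Concretely, I would introduce the intermediate quantity $R_{uu}^M(m)$ of Proposition \ref{P3} --- the autocorrelations one recovers if only the first $M$ Markov parameters are used but the data are otherwise exact --- and write
\begin{eqnarray*}
\Delta(m) &=& R_{uu}(m) - \hat{R}_{uu}(m) \\
&=& \big( R_{uu}(m) - R_{uu}^M(m) \big) + \big( R_{uu}^M(m) - \hat{R}_{uu}(m) \big).
\end{eqnarray*}
Proposition \ref{P3} gives $\| R_{uu}(m) - R_{uu}^M(m) \| \le k_M \delta$ directly. The remaining term $R_{uu}^M(m) - \hat{R}_{uu}(m)$ is precisely the gap between the recovery run with exact data and the recovery run with the $N_i$-, $N_o$-truncated data, both using the same $M$-truncated coefficient matrix $C_{yu}$; this is exactly the situation of Proposition \ref{P4}, whose argument then yields $\| R_{uu}^M(m) - \hat{R}_{uu}(m) \| \le k_N \delta$ for some constant $k_N$. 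Adding, $\| \Delta(m) \| \le (k_M + k_N)\delta =: k\delta$.

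The one point that carries real content --- and the step I expect to be the main obstacle --- is checking that the least-squares recovery operator has $\delta$-independent norm, so that none of the constants secretly blows up. By Proposition \ref{P2}, $C_{yu}$ has full column rank, hence $C_{yu}^{\dagger} = (C_{yu}^* C_{yu})^{-1} C_{yu}^*$ exists with $\| C_{yu}^{\dagger} \| = 1/\sigma_{\min}(C_{yu})$ finite and fixed once $M$, $N_i$, $N_o$ are chosen. A perturbation of size $O(\delta)$ in the right-hand side then perturbs the solution by at most $\| C_{yu}^{\dagger} \|$ times that, while the $O(\delta)$ perturbation of $C_{yu}$ itself contributes the usual second-order least-squares term $\| C_{yu}^{\dagger} \|^2 \| \cdot \| + \cdots$; all of these are proportional to $\delta$ because the neglected Markov parameters and the neglected tails of $R_{uu}$ and $\hat{R}_{yy}$ are $O(\delta)$ by hypothesis. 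Since the recovery map is linear in the data, the superposition above is exact, so no estimate beyond Propositions \ref{P3} and \ref{P4} is actually needed; the bookkeeping of how the two truncations enter $C_{yu}$ and the data vector simultaneously is the only delicate part.

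As a consistency check one can also argue in the Fourier domain: the remarks following Propositions \ref{P3} and \ref{P4} give $\| \Delta S_M(\omega) \| \le k_1 \delta$ and $\| \Delta S_N(\omega) \| \le k_2 \delta$ uniformly in $\omega$, so the total power-spectral-density error is bounded by $(k_1 + k_2)\delta$, and transforming back over the finite lag window $|m| \le N_i$ bounds $\| \Delta(m) \|$ by a constant multiple of $\delta$.
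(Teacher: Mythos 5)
Your argument is correct and matches the paper's own proof in essence: the paper likewise superposes the two error sources of Propositions \ref{P3} and \ref{P4} via the triangle inequality (it bounds $\|\text{vec}(\hat{R}_{yy}) - \text{vec}(\hat{R}_{yy}^c)\|_2 \leq \|\Delta_2\|_2 + \|\Delta_4\|_2 \leq k_5\delta$ and then pushes this through the inverse of the coefficient matrix ``following the same procedure as in Proposition \ref{P3}''). The only cosmetic difference is that you split the error in the solution space while the paper splits it in the data space before applying the fixed linear recovery operator, and your explicit observation that $\|C_{yu}^{\dagger}\|$ is $\delta$-independent (via Proposition \ref{P2}) is a detail the paper leaves implicit.
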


Proposition \ref{P3} and \ref{P4} are used for the proof, and the proof is shown in Appendix \ref{AP3}. The results above show that if $M$, $N_i$, $N_o$ are chosen large enough, the errors in estimating the input autocorrelations can be made arbitrarily small.

\subsection{Construction of the AR Based Innovations Model}
After we extract the input autocorrelations from the output autocorrelations, we want to construct a system which will generate the same statistics as the ones we recovered in Section \ref{section 3A}. If assumption A4 is satisfied, i.e., $\{u_k \}$ is WSS with a rational power spectrum,  the power spectrum of $u_k$ is continuous, and can be modelled as the output of a casual linear time invariant system driven by white noise \cite{stoc1985}. Such system can be constructed by using an autoregressive moving average (ARMA) model, and in practice, a MA model can often be approximated by a high-order AR model, and thus, with enough coefficients, any stationary process can be well approximated by using either AR or MA models (Chapter 9, \cite{ARMA}), and in this paper, we use an AR model to fit the data. In an AR model, the time series can be expressed as a linear function of its past values, i.e.,
\begin{eqnarray}\label{relation_ar}
u(k) = \sum_{i=1}^{M_i} a_i u(k-i) + \epsilon(k),
\end{eqnarray}
where $\epsilon(k)$ is white noise with distribution $N(0, \Omega_r)$, $M_i$ is the order of the AR model, and  $a_i, i=1, 2, \cdots, M_i$ are the coefficient matrices. For a vector autoregressive model with complex values, the Yule-Walker equation \cite{yule} which is used to solve for the coefficients needs to be modified. The modified Yule-Walker equation can be written as:
\begin{eqnarray}\label{yule-walker}
 \begin{pmatrix}R_{uu}(-1) &  R_{uu}(-2) &  \cdots & R_{uu}(-M_i) \end{pmatrix}=
\begin{pmatrix} a_1^* \\ a_2^* \\ \cdots \\ a_{M_i}^*\end{pmatrix}^* \times \nonumber \\
\begin{pmatrix} R_{uu}(0) & R_{uu}(-1) & \cdots & R_{uu}(1-M_i) \\
			R_{uu}(1) & R_{uu}(0) & \cdots & R_{uu} (2-M_i) \\
 			\vdots & \vdots & \vdots & \vdots \\
			R_{uu}(M_i-1) & R_{uu}(M_i-2) & \cdots & R_{uu}(0)  \end{pmatrix}.
\end{eqnarray}

Equation (\ref{yule-walker}) is used to solve for the coefficient matrices $a_i, i=1, 2, \cdots, M_i$. The covariance of the residual white noise $\epsilon(k)$ can be solved using the following equation:
\begin{eqnarray}\label{cov}
 R_{\epsilon \epsilon}(m)  = R_{uu}(m) - \sum_{i=1}^{M_i} \sum_{j=1}^{M_i} a_i R_{uu}(m+ i - j) a_j^*,
\end{eqnarray}
where $ \Omega_r = R_{\epsilon \epsilon}(0)$. The balanced minimal realization for the AR model (\ref{relation_ar}) can be expressed as:
\begin{eqnarray} \label{open loop}
\eta_k = A_n \eta_{k-1} + B_n u_{k-1}, \nonumber \\
{u}_k = C_n \eta_k + \epsilon_k,
\end{eqnarray}
where $(A_n, B_n, C_n)$ are solved by using the ERA technique \cite{JUANG} with $a_i, i=1, \cdots, M_i$ as the Markov parameters of the  system. A brief description of the ERA is given in Appendix \ref{AP4}.

Equation (\ref{open loop}) is equivalent to:
\begin{eqnarray}\label{closed loop}
\eta_k = (A_n+B_nC_n )\eta_{k-1} + B_n \epsilon_{k-1}, \nonumber \\
{u}_k = C_n \eta_k + \epsilon_k,
\end{eqnarray}
where $\epsilon_{k}$ is white noise with covariance $\Omega_r$. We make the following remark. 
\begin{remark}\label{era_stable}
We need to find a stable $A_n + B_nC_n$ in (\ref{closed loop}). In practice, we calculate the Markov parameters of system (\ref{closed loop}) using $a_i, i = 1, \cdots, M_i$ first, and then use the ERA for the state space realization.  If the Markov parameters of system (\ref{closed loop}) are $\hat{a}_i, i = 1, \cdots, M_i$, then $\hat{a}_1 = C_n B_n = a_1, \hat{a}_2 = C_n (A_n + B_nC_n) B_n = a_2 + a_1 a_1, \cdots $. As we explained before, for a WSS process with rational power spectrum, from \cite{stoc1985} , we can always find a stable realization $(A_n + B_nC_n, B_n, C_n)$. 
\end{remark}

By using the Cholesky Decomposition, we can find a unique lower triangular matrix $P$ such that:
\begin{eqnarray}
\Omega_r = PP^*.
\end{eqnarray}

If $w_k$ is white noise with distribution $N(0,1)$, then $Pw_k$ would be white noise with distribution $N(0, \Omega_r)$. Thus, the innovation model we construct that has the same statistics as the unknown input system (\ref{actual colored noise}) is:
\begin{eqnarray}\label{final}
\eta_k = (A_n+B_nC_n )\eta_{k-1} + B_n Pw_{k-1}, \nonumber \\
{u}_k = C_n \eta_k + Pw_k,
\end{eqnarray}
where $w_k$ is a randomly white noise with standard normal distribution. 

Under assumption A4, we have the following proposition.
\begin{proposition}\label{P6}
Denote $\hat{R}_{uu}(m)$ as the input autocorrelations recovered from the measurements,
then $\hat{R}_{uu}(m)$ can be reconstructed exactly by using the innovations model (\ref{final}), i.e., $\tilde{R}_{uu}(m)  = \hat{R}_{uu}(m)$, where $\tilde{R}_{uu}(m)$ is the input autocorrelations of the realization of system (\ref{final}).
\end{proposition}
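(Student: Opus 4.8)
The plan is to verify the identity in two stages: first that the innovations model (\ref{final}) and the AR model (\ref{relation_ar}), driven by a white noise of covariance $\Omega_r$, define the \emph{same} second-order process; second that the stationary autocovariance of that process is exactly $\hat R_{uu}$, because $\{a_i\}$ and $\Omega_r$ were computed from $\hat R_{uu}$ through (\ref{yule-walker}) and (\ref{cov}). Throughout I assume, as is implicit in the construction, that $M_i$ is no larger than the lag $N_i$ up to which $\hat R_{uu}$ has been recovered.

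For the first stage I would unroll (\ref{final}). Writing $\epsilon_k := P w_k$, which by the Cholesky factorization $\Omega_r = PP^{*}$ and $w_k \sim N(0,I)$ is white noise with covariance $\Omega_r$, one gets the causal moving-average expansion $u_k = \sum_{i\ge 0} g_i\,\epsilon_{k-i}$ with $g_0 = I$ and $g_i = C_n(A_n+B_nC_n)^{i-1}B_n$ for $i\ge 1$; the series converges since Remark~\ref{era_stable} ensures $A_n+B_nC_n$ is stable. The AR recursion (\ref{relation_ar}) likewise has the causal expansion $u_k = \sum_{i\ge 0}\tilde g_i\,\epsilon_{k-i}$ with $\tilde g_0 = I$ and $\tilde g_m = \sum_{i=1}^{M_i} a_i\tilde g_{m-i}$ for $m\ge 1$. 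By Remark~\ref{era_stable} the Markov parameters supplied to the ERA are exactly these, $\hat a_m = \tilde g_m$ (so $\hat a_1 = a_1$, $\hat a_2 = a_2 + a_1a_1,\dots$), and since the AR filter is a finite-dimensional LTI system the ERA, given enough Markov parameters, returns an exact realization, so $g_i = \hat a_i = \tilde g_i$ for all $i$. Hence the two expansions coincide term by term and $\tilde R_{uu}(m)$ equals the autocovariance of the AR process with noise covariance $\Omega_r$.

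For the second stage I would run the standard Yule--Walker computation on (\ref{relation_ar}): multiplying by the appropriate shifted copy of $u^{*}$ and taking expectations, using $E[u_k\epsilon_k^{*}] = \Omega_r$ (because $\tilde g_0 = I$) together with $E[u_k\epsilon_{k+m}^{*}] = 0$ for $m\ge 1$, one obtains for $m = 1,\dots,M_i$ precisely the modified Yule--Walker relations (\ref{yule-walker}) written for $\tilde R_{uu}$, for $m = 0$ precisely (\ref{cov}) with $\Omega_r = \tilde R_{\epsilon\epsilon}(0)$, and for $m > M_i$ the extension recursion $\tilde R_{uu}(m) = \sum_{i=1}^{M_i}\tilde R_{uu}(m-i)\,a_i^{*}$; here the Hermitian symmetry $\tilde R_{uu}(-m) = \tilde R_{uu}(m)^{*}$ must be invoked to match the paper's ordering conventions. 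Since $\{a_i\}$ and $\Omega_r$ in (\ref{final}) were, by construction, the solution of exactly these equations with $\hat R_{uu}(0),\dots,\hat R_{uu}(M_i)$ inserted, the finite tuples $\{\tilde R_{uu}(m)\}_{|m|\le M_i}$ and $\{\hat R_{uu}(m)\}_{|m|\le M_i}$ solve one and the same linear system; stability of $A_n+B_nC_n$ guarantees this system has a unique solution (the unique stationary covariance), whence $\tilde R_{uu}(m) = \hat R_{uu}(m)$ for $|m|\le M_i$. For $|m| > M_i$ the recovered $\hat R_{uu}$ obeys the same recursion under assumption A4 (rational spectrum), so the equality propagates to all lags.

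I expect the genuine obstacle to be this last point: for $|m|\le M_i$ the equality is an essentially algebraic consequence of the Yule--Walker construction plus uniqueness of the stationary solution, but matching the two sequences \emph{beyond} lag $M_i$ is where A4 does real work — strictly, a rational spectrum corresponds to an ARMA rather than a finite AR model, so this step must be read either in the idealized (exact finite-order AR) limit or, in the approximate setting, together with an error estimate in the spirit of Propositions~\ref{P3}--\ref{P5}. A secondary point needing care is the complex-conjugation bookkeeping, so that (\ref{cov}), (\ref{yule-walker}) and $\tilde R_{uu}(-m) = \tilde R_{uu}(m)^{*}$ are all applied with consistent orderings.
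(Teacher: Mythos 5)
The paper never actually proves Proposition~\ref{P6}: it is stated bare and the text jumps straight to the corollary, with appendices covering only Propositions~\ref{P3}--\ref{P5} and the ERA/BPOD descriptions. So there is no ``paper proof'' to compare against; your proposal has to be judged on its own. Your two-stage plan is the right one and is essentially sound where it is sound: identifying $(\ref{final})$ with the AR recursion $(\ref{relation_ar})$ driven by $\epsilon_k = Pw_k$ via the closed-loop Markov parameters $\hat a_i$ of Remark~\ref{era_stable} (modulo the caveat, which you state, that ERA is only exact if the Hankel matrix is large enough relative to the McMillan degree of $(I-\sum_i a_i z^{-i})^{-1}$), and then invoking the covariance-matching property of the Yule--Walker fit together with uniqueness of the stationary covariance of a stable AR model to get $\tilde R_{uu}(m)=\hat R_{uu}(m)$ for $|m|\le M_i$. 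One bookkeeping point you correctly flag but should not underestimate: the paper's residual-covariance formula $(\ref{cov})$ agrees with $E[\epsilon_k\epsilon_k^*]$ only after the Yule--Walker relations are substituted into the cross terms, so the $m=0$ matching is not independent of the $m=1,\dots,M_i$ equations.

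The obstacle you name at the end is a genuine gap in the proposition as stated, not merely in your argument. The recovered sequence $\hat R_{uu}(m)$ is defined for all $|m|\le N_i$, while the Yule--Walker construction only constrains the AR model by the lags $|m|\le M_i$; for $M_i<|m|\le N_i$ the AR model produces the recursive (maximum-entropy) extension $\tilde R_{uu}(m)=\sum_{i=1}^{M_i}a_i\tilde R_{uu}(m-i)$, which coincides with $\hat R_{uu}(m)$ only if the recovered sequence itself satisfies that recursion --- i.e.\ only if the underlying input is exactly AR($M_i$). Assumption A4 gives a rational (ARMA) spectrum, and the paper itself concedes that the MA part is only \emph{approximated} by a high-order AR model, so ``reconstructed exactly'' cannot hold at all lags in general. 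The honest statement is either equality for $|m|\le M_i$, or an approximate version with an error term in the spirit of Propositions~\ref{P3}--\ref{P5} (which is evidently how the authors use it, since the corollary that follows carries a $k_a\delta$ bound). Your proof is therefore as complete as the claim permits, and more explicit than anything the paper provides.
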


From Proposition \ref{P5} and \ref{P6}, under the same assumptions, the following corollary immediately follows.
\begin{corollary}
Denote $u_k$ as the actual unknown input process, and $R_{uu}(m)$ as the actual input autocorrelation function. Then $\| \tilde{R}_{uu} (m) - R_{uu} (m) \| \leq k_a \delta$, where $k_a$ is some constant, when $\delta$ is small enough. System (\ref{final}) is an innovations model for the unknown input $u_k$. 
\end{corollary}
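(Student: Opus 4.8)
The plan is to derive the stated bound by composing Propositions~\ref{P6} and~\ref{P5} through the triangle inequality, and then to check that the state-space system (\ref{final}) qualifies as an innovations model in the usual sense. Because Proposition~\ref{P6} already delivers exact reconstruction of the \emph{recovered} autocorrelations, the analytic content here is almost entirely bookkeeping: the real work was carried out in establishing Propositions~\ref{P3}--\ref{P6}.

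First I would recall from Proposition~\ref{P6} that, under assumption A4, the realization (\ref{final}) reproduces the recovered autocorrelations exactly, $\tilde{R}_{uu}(m) = \hat{R}_{uu}(m)$ for every lag $m$; this is the step that links the state-space model back to the least-squares data, and it relies on the Yule--Walker equations (\ref{yule-walker}) and (\ref{cov}) being solved so that the AR model (\ref{relation_ar}) matches $\hat{R}_{uu}$ on the lags used, together with the ERA returning a realization whose Markov parameters are exactly the fitted $a_i$. Next I would invoke Proposition~\ref{P5}: under the smallness hypotheses on $\|h_i\|$ for $i>M$, on $\|R_{uu}(m)\|$ for $|m|>N_i$, and on $\|\hat{R}_{yy}(m)\|$ for $|m|>N_o$, one has $\|R_{uu}(m)-\hat{R}_{uu}(m)\| \le k\delta$ with $k$ independent of $\delta$. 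Combining the two, for every lag $m$,
\begin{eqnarray}
\|\tilde{R}_{uu}(m)-R_{uu}(m)\| = \|\hat{R}_{uu}(m)-R_{uu}(m)\| \le k\delta,
\end{eqnarray}
so the first assertion holds with $k_a = k$.

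To justify the last sentence, I would verify the defining features of an innovations model for (\ref{final}): it is driven by the standardized white-noise process $w_k$ by construction; its state-transition matrix $A_n+B_nC_n$ is stable by Remark~\ref{era_stable}, which in turn appeals to the spectral-factorization result of \cite{stoc1985} for WSS processes with rational PSD; and hence its output $u_k = C_n\eta_k + Pw_k$ is a WSS process whose second-order statistics coincide with those of the true unknown input up to the $O(\delta)$ error just established. Since $M$, $N_i$, and $N_o$ may be chosen as large as desired, $\delta$ can be driven to zero, so the model matches the true input statistics in the limit. I expect the only genuinely delicate point to be the uniformity of $k_a$ --- i.e. that the constants in Propositions~\ref{P3}--\ref{P5} do not degrade as the design parameters are enlarged --- together with the fact that Remark~\ref{era_stable} secures stability of the \emph{particular} realization produced by the ERA rather than of some realization; both are inherited from the cited results, which makes the corollary itself essentially immediate.
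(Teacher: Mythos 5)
Your proposal matches the paper's own argument: the corollary is stated as following immediately from Propositions~\ref{P5} and~\ref{P6}, i.e.\ exact reconstruction $\tilde{R}_{uu}(m)=\hat{R}_{uu}(m)$ combined with the bound $\|R_{uu}(m)-\hat{R}_{uu}(m)\|\leq k\delta$, exactly as you write. Your additional remarks on stability via Remark~\ref{era_stable} and on driving $\delta$ to zero are consistent with the paper's surrounding discussion and do not change the route.
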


The procedure of constructing the innovations model is summarized in Algorithm \ref{unknown_algo}.
\begin{algorithm}[!tb] 
\caption{AR model based unknown input realization technique}\label{unknown_algo}
\begin{enumerate}
\item {Choose a finite number $N_o$, compute output autocorrelation function $R_{yy}(m)$ by using measurements $y_k$, $|m| \leq N_o$.}
\item {Choose a finite number $M$, construct the coefficient matrix $C_{yu}$ from (\ref{re_yu_cut}).}
\item {Choose a finite number $N_i$, solve the least squares problem (\ref{autocoef}) for unknown input autocorrelation function $R_{uu}(m)$, $|m| \leq N_i$.}
\item {Construct an AR model for the unknown input $u(k) = \sum_{i=1}^{M_i} a_i u(k-i) + \epsilon(k)$, find the coefficient matrices $a_i, i = 1, 2, \cdots M_i$ by solving the modified Yule-Walker equation (\ref{yule-walker}).}
\item {Find the covariance $\Omega_r$ of $\epsilon(k)$ by solving (\ref{cov}).}
\item {Construct the state space representation (\ref{open loop}) for the AR model using ERA.}
\item {Find a unique lower triangular matrix $P$ such that $\Omega_r = P P^*$, and construct an innovations model as in (\ref{final}).}
\end{enumerate}
\end{algorithm}

\begin{remark}
For real valued system, we can save the computation by using the properities of autocorrelation functions:
\begin{eqnarray}
R_{u_iu_i} (-m) = R_{u_i u_i}(m),  \nonumber \\
R_{u_iu_j} (-m) = R_{u_j u_i}(m),  i \neq j
\end{eqnarray}
Thus, we only need to collect $N_o+1$ output autocorrelations and have $p^2(N_o+1)$ equations with $q^2(N_i +1)$ unknowns in (\ref{autocoef}). 
\end{remark}
\begin{remark}
A generalization to the joint state and unknown input estimation. 

When the unknown inputs affect both the states and outputs, i.e. 
\begin{eqnarray}\label{joint}
x_{k+1} = A x_k + B u_k,  \nonumber \\
y_k = C x_k + D u_k + v_k, 
\end{eqnarray}
where $u_k$ is the stochastic unknown input, $v_k$ is the measurement noise.  The solution $y_k$ can be written as:
\begin{eqnarray}
y_k = \sum_{i=1}^M h_i u_{k-i} + D u_k + v_k,
\end{eqnarray}
and the relationship between output autocorrelations and input autocorrelations is:
\begin{eqnarray}
R_{yy}(m) = \sum_{i=1}^M \sum_{j=1}^M h_i R_{uu}(m+i-j) h_j^* + R_{vv}(m) +  \nonumber \\
\sum_{i=1}^N h_i R_{uu} (m+i) D^* + \sum_{i=1}^N D R_{uu} (m-j) h_j^* + D R_{uu} (m) D^*,
\end{eqnarray}
which can also be formulated as a least squares problem (\ref{autocoef}), and an unknown input system may be realized following the same procedure as in Algorithm \ref{unknown_algo}.
\end{remark}
\section{Augmented State Kalman Filter and Model Reduction} \label{Section 5}
After we construct an innovations model for the unknown inputs,  we apply the standard Kalman filter on the augmented system with states augmented by the unknown input states. A ROM based filter is also constructed using the BPOD for reducing the computational cost of the resulting filter.
\subsection {Augmented State Kalman Filter}
The full order system can be represented by augmenting the states of the original system as:
\begin{eqnarray}\label{askf}
 \begin{pmatrix} x_{k+1} \\  \eta_{k+1} \end{pmatrix} = \begin{pmatrix} A & B C_n \\ 0 & A_n + B_n C_n \end{pmatrix}\begin{pmatrix} x_k \\ \eta_k \end{pmatrix} + \begin{pmatrix} B P \\ B_n P \end{pmatrix} w_k, \nonumber \\
 y_k = \begin{pmatrix} C & 0 \end{pmatrix} \begin{pmatrix} x_k \\ \eta_k \end{pmatrix}  +  v_k,
\end{eqnarray}
where $w_k$ is white noise with standard normal distribution. $v_k$ is white noise with known covariance. Thus, we may now use the standard kalman filter for state estimation of the augmented system (\ref{askf}).

\begin{remark}
The augmented state system (\ref{askf}) is stable and detectable. The eigenvalues of the augmented system (\ref{askf}) are the eigenvalues of $A$ and the eigenvalues of $A_n + B_n C_n$. From assumption A1, $A$ is stable, from Remark \ref{era_stable}, $A_n + B_n C_n$ is stable, and hence, the augmented system (\ref{askf}) is stable. From assumption A1, system (\ref{original system}) is detectable, and from the asymptotic stability of  matrix $A_n + B_n C_n$, (\ref{closed loop}) is also detectable, therefore, all the unobservable modes  in (\ref{askf}) are asymptotically stable, which implies that (\ref{askf}) is detectable. Thus, we may now use the standard Kalman filter for state estimation of the augmented system (\ref{askf}).
\end{remark}
\subsection{Unknown Input Estimation Using Model Reduction}

For large scale systems, we can use model reduction technique such as Balanced Proper Orthogonal Decomposition (BPOD) to construct a reduced order model (ROM) first, and then extract the input autocorrelations from the reduced order model. We apply the Kalman filter to the ROM to reduce the computational cost. A brief description of BPOD is given  in Appendix \ref{AP4}.  For a large scale system with a large number of inputs and outputs, we can also use the randomized proper orthogonal decomposition (RPOD) technique \cite{acc2015} for model reduction. 

The ROM system is extracted from the full order system using the BPOD and is denoted by:
\begin{eqnarray}
x_k = A_r x_{k-1} + B_r u_{k-1}, \nonumber \\
y_k = C_r x_k + v_k.
\end{eqnarray}
Let $\hat{h}_i = C_r A_r^{i-1}B_r, i = 1, 2, \cdots, M$ be the Markov parameters of the ROM. Then the relationship between input autocorrelations and output autocorrelations can be written as: 
\begin{eqnarray}
\hat{R}_{yy}(m) =  \sum_{i=1}^M \sum_{j=1}^M \hat{h}_i R_{uu}(m+i-j) \hat{h}_j^*.
\end{eqnarray}

Following the same procedure as in Algorithm \ref{unknown_algo}, we can now recover the input autocorrelations, and construct an innovations model which can generate the same statistics as the unknown inputs. The advantage of using model reduction is that for a large scale system, computing $\hat{h}_i = C_r A_r^{i-1} B_r$ is much faster than computing $h_i = C A^{i-1} B$ because of the reduction in the size of $A$. Also, the order of the ROM is much smaller than the order of the full order system, and thus the computational cost of using the Kalman filter is much reduced. Hence, even with the augmented states, the standard Kalman filter remains computationally tractable. 

\begin{remark}
To reduce the computational cost of the augmented states in Kalman filter, we can also use the existing optimal two-stage or three-stage kalman filtering technique \cite{uif9, uif6}, which decouple the augmented filter into two parallel reduced order filters. These techniques are preferable when the order of the innovations model is high, while the BPOD based ROM filter is preferable when the order of the dynamic system is high.
\end{remark}

\section{Computational Results} \label{Section 4}
We test the method on a one-dimensional heat equation and the perturbed laminar flow equation. We construct the unknown input system by using both the full order system as well as the ROM constructed by BPOD. We check the results by comparing the autocorrelation functions of the inputs, outputs and the states. Also, we show the state estimation using the Kalman filter. We define the relative error as:
\begin{eqnarray}
R_{relative} =\frac{ \|R_{true} - R_{es} \|}{\|R_{true}\|},
\end{eqnarray}
$R_{true}$ : actual output/input/state  autocorrelation function of the system\\
$R_{es}$   : estimated output/input/state  autocorrelation function 

In the following, we will show simulation results for the stochastically perturbed 1D heat equation and the laminar flow problem.
\subsection{Heat Equation}
The equation for heat transfer by conduction along a slab is given by the partial differential equation:
\begin{eqnarray}
\frac{\partial T}{\partial t} = \alpha \frac{\partial^2 T}{\partial x^2}+f, \nonumber \\
T|_{x= 0} = 0, \frac{\partial T}{\partial x}|_{x = L} = 0,
\end{eqnarray}
where $\alpha$ is the thermal diffusivity, $L = 1m$, and $f$ is the unknown forcing. There are two point sources  located at $x = 0.5m$ and $x = 0.6m$. 

The system is discretized using finite difference approach, and there are 50 grids which are equally spaced. To satisfy the observer matching condition in the UMV algorithm, we take  two measurements at $x = 0.5m$, $x = 0.6m$. The measurement noise is white noise with covariance $0.1 I_{2 \times 2}$. In the simulation, the unknown inputs are  generated using (\ref{actual colored noise}) with 
\begin{eqnarray}\label{parameter}
A_e = \begin{pmatrix} 0.3 & 0.5 \\ 0.4 & 0.2 \end{pmatrix}, B_e = C_e = I_{2 \times 2},
\end{eqnarray}
and $\nu_k =0, \mu_k \sim N(0, 10I_{2 \times 2}).$ The design parameters $M = 4000$, $N_i = 200$, $N_o = 2000$ are chosen as follows.  $M$ is chosen so that the Markov parameters $\|h_i \| \approx 0, i > M$. $N_i$ and $N_o$ are chosen by trial and error. First, we randomly choose a suitable $N_i$ and $N_o$, where $N_i \leq N_o$. Then we follow the AR based unknown input realization procedure, and construct the augmented state system (\ref{askf}). Given the white noise processes $w_k$, $v_k$ perturbing the system, we check the output statistics of the augmented state system (\ref{askf}). If the errors are small enough, we stop, otherwise, we increase the values of $N_i$ and $N_o$, and repeat  the same procedure until the errors are negligible. Notice that increasing $M$, $N_i$, $N_o$ would increase the accuracy of the input statistics we can recover, but also increases the computational cost. 

First, in Figure \ref{heat_inputf}, we show the comparison of the input correlations we recover with the actual input correlations. Since there are two inputs, thus, the cross-correlation function between input 1 and input 2 are also included.
\begin{figure}[!tb]
\centering
\includegraphics[scale=0.5]{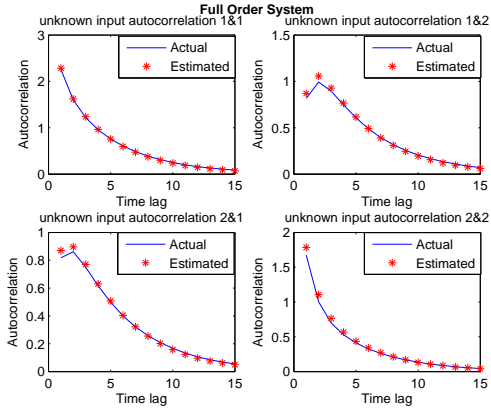}
\caption{Comparison of input autocorrelations}
\label{heat_inputf}
\end{figure}
It can be seen that the statistics of the unknown inputs can be recovered almost perfectly, and given the system perturbed by the unknown inputs innovations model we constructed, the statistics of the outputs and the states are almost the same as well. 

Next, we compare the performance of the unknown inputs constructed  using the ROM with the full order system. The full order system has 50 states, and the ROM has 20 states. The relative error of the input correlation is shown in Figure \ref{heat_input_relative}.  
\begin{figure}[!bt]
\centering
\includegraphics[width=3.3in]{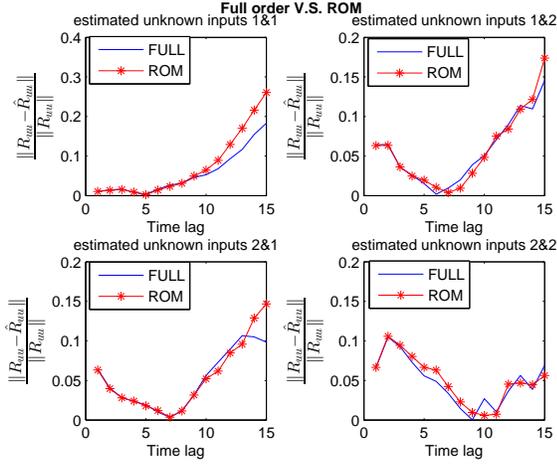}
\caption{Comparison of input autocorrelation relative error}
\label{heat_input_relative}
\end{figure}
We can see that the statistics reconstructed by using the ROM is not as accurate as using the full order system, however, the relative error is on the same scale, and hence, the computational cost is reduced without losing much accuracy. 

The state estimation using ROM is shown in Figure \ref{heat_state_filter}.  We randomly choose two states and show the comparison of the actual state with the estimated states. The state estimation error and $3 \sigma$ bounds are shown. It can be seen that the Kalman filter using the ROM performs well, and hence, for a large scale system, the computational complexity of ASKF can be reduced by using the BPOD. 
\begin{figure}[!tb]
\centering
\includegraphics[width=3.3in]{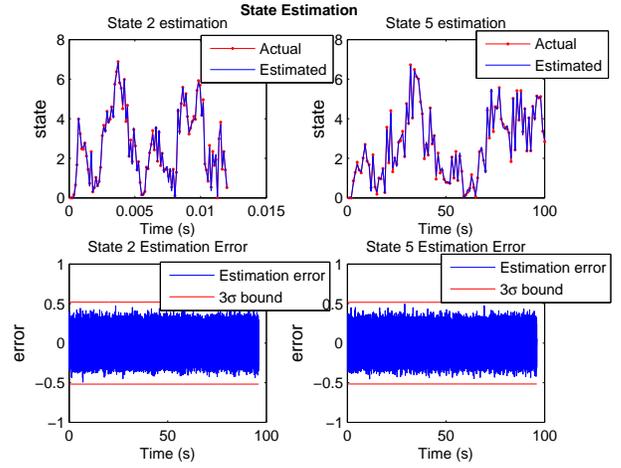}
\caption{Comparison of state estimation}
\label{heat_state_filter}
\end{figure}
\subsection{Comparison with OTSKF and UMV Algorithms}\label{section6}
Next, we compare the performances of the AR model based algorithm with OTSKF and UMV algorithms. The OTSKF and UMV algorithms we use can be found in \cite{rtskf1}. 

The assumed unknown input model used in the OTSKF is not the same as the true model, in particular, the system matrices of the input system are perturbed from the true values, the model used for OTSKF is: 
\begin{eqnarray}\label{wrong}
\eta_{k+1} = A_o \eta_k + v_k =  \begin{pmatrix} 0.4569  & 0.2768 \\ 0.2214& 0.4016 \end{pmatrix}\eta_k + v_k,
\end{eqnarray}
where $v_k \sim N(0, 10 I_{2 \times 2})$. Here, $A_o$ is chosen as follows. The eigenvalues of $A_e$  in (\ref{parameter}) are $0.7, -0.2$. We perturb the eigenvalues of $A_e$ with randomly generated numbers between $[-0.3, 0.3]$ and $[-0.8, 0.8]$ with uniform distribution respectively, and keep the eigenvectors same as the eigenvectors of $A_e$. The perturbed eigenvalues are $0.6783, 0.1802$.  We calculate the output statistics of (\ref{parameter}) and (\ref{wrong}), and we can see that the unknown input statistics used in OTSKF are perturbed by $5 \%$ about the true value. The estimation of the initial state $\bar{x}_0$ and covariance $\bar{P}_0$ in three algorithms are the same. 

Denote the average root mean square error(ARMSE) as:
\begin{eqnarray}
ARMSE = \frac{1}{n} \sum_{i = 1}^n \sqrt{\frac{\sum_{ k = 1}^n (\hat{x}_{i}(k)- x_{i}(k))^2}{n}},
\end{eqnarray}
where $\hat{x}_i(k)$ is the state estimate $\hat{x}_i$ at time $t_k$, and $x_i(k)$ is the true state $x_i$ at time $t_k$, where $i$ denotes the $i^{th}$ component of the state vector.

Suppose at the state component $x_i$, the measurement noise $v_k$ is a white noise with zero mean and covariance $\Omega_i$. We define a noise to signal  ratio (NSR): 
\begin{eqnarray}
NSR = \sqrt{\frac{{| \Omega_i | }}{(E[x_ix_i^*])}}.
\end{eqnarray}
We vary the measurement noise covariance $\Omega_i$, and for each $\Omega_i$, a Monte Carlo simulation of 10 runs is performed to compare the magnitude of the ARMSE  using AR model based algorithm with the OTSKF and UMV algorithms in Table \ref{tab1}.
\begin{table}[!tb]
\caption{Performances of the AR model based algorithm, OTSKF and UMV} \label{tab1}
\centering
\begin{tabular}{c|c|c|c|}
NSR  & AR model based & OTSKF & UMV \\ 
\hline
$0.2215 \%$ & 0.0036 & 0.0111 & 0.0033 \\
\hline 
$6.8704 \%$ & 0.0832 & 0.2418 & 0.0874\\
\hline 
$13.5171 \%$ & 0.1309& 0.3955& 0.1528 \\
\hline
$20.3456 \%$ & 0.3810  & 0.6516  & 0.4332 \\
\hline
$26.9467  \%$ & 0.4190   & 0.7141 & 0.5112 \\
\hline
\end{tabular}
\end{table}
The comparison is shown in Figure \ref{heat_comp}. It can be seen that the AR model based method performs the best. 
Note that when the assumed unknown input model used in OTSKF is not accurate, the performance of AR model based algorithm is much better while with  increase in the sensor noise, the performance of the AR model based  algorithm gets better than the UMV algorithm. It should also be noted that when  the sensors and the unknown inputs are non-collocated, the ``observer matching" condition is not satisfied, and hence, the UMV algorithm can not be used, while the OTSKF and the AR model based algorithm are not affected. 
\begin{figure}[!tb]
\centering
\includegraphics[height=4cm]{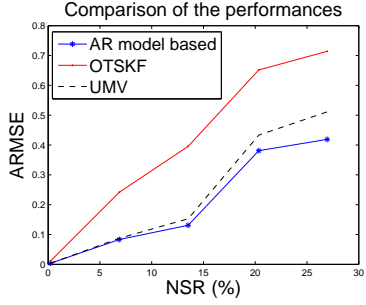}
\caption{Comparison of the performances}
\label{heat_comp}
\end{figure}

\subsection{Orr-Sommerfeld Equation}
Consider the three-dimensional flow between two infinite plates (at $y= \pm 1$) driven by a gradient in the streamwise $x$ direction. The mean velocity profile is given by $U(y) = 1 - y^2$. At each wavenumber pair $( \alpha, \beta)_{mn}$, the wall-normal velocity $v(x,y,z,t)$  and wall-normal vorticity $\eta(x, y, z, t)$ are:
\begin{eqnarray}
v(x, y, z, t) = \hat{v}_{mn}(y, t) e^{ i(\alpha x + \beta z)} ,\\
\eta(x, y, z, t) = \hat{\eta}_{mn}(y, t) e^{i(\alpha x + \beta z)}.
\end{eqnarray}

Denote 
\begin{eqnarray}
\hat{q}_{mn}(y, t) = \begin{pmatrix} \hat{v}_{mn}(y, t) \\ \hat{\eta}_{mn}(y,t) \end{pmatrix},
\end{eqnarray}
where $\hat{(.)} $ denotes the Fourier transformed variable, and $(.)_{mn}$ denotes the wavenumber pair $(\alpha, \beta)_{mn}$.

The evolution of the flow in Fourier domain can  be written as:
\begin{eqnarray}\label{orr-sommer}
\frac{d}{dt} M \hat{q}_{mn} + L \hat{q}_{mn} = T f(y,t),
\end{eqnarray}
where 
\begin{eqnarray}
M = \begin{pmatrix}
-\Delta & 0 \\
0 & I
\end{pmatrix},
\end{eqnarray}
\begin{eqnarray}
L = \begin{pmatrix}
-i \alpha U \Delta + i \alpha U^{''} + \Delta^2 / Re & 0\\
i \beta U^{'} & i \alpha U - \Delta / Re
\end{pmatrix}.
\end{eqnarray}

Operater $T$ transforms the forcing $f = (f_1, f_2, f_3)^T$ on the evolution equation for the velocity vector $(u, v, w)^T$ into an equivalent forcing on the $(v, \eta)^T$ system \cite{fluidap1},
\begin{eqnarray}
T = \begin{pmatrix}
i \alpha D & k^2 & i \beta D\\
i \beta & 0 & -i \alpha
\end{pmatrix},
\end{eqnarray}
where 
\begin{eqnarray}
k^2 = \alpha^2 + \beta^2, \\
\Delta = D^2 - k^2,
\end{eqnarray}
and $D$, $D^2$ represent the  first  and second order differentiation operators in the wall-normal direction. The forcing  $f(y,t)$ accounts for the nonlinear terms and the external disturbances via an unknown stochastic model.

The boundary conditions on $v$ and $\eta$ correspond to no-slip solid walls
\begin{eqnarray}
v(\pm 1)= Dv(\pm 1) = \eta(\pm 1) = 0.
\end{eqnarray}

System (\ref{orr-sommer}) can be discretized using Chebyshev polynomials, and in the simulation, we assume there are two unknown inputs and two measurements.\\

In the simulation, the design parameters $M=1000$, $N_i = N_o = 100$ are chosen by trial and error as explained before. 
The unknown input $f$ is assumed to be a colored noise generated by a third order linear complex system. The realization of the unknown inputs is a second order system. The measurement noise is white noise with covariance $0.1 I_{2 \times 2}$. 

First, we show the comparison of the input autocorrelations we recover with the actual input autocorrelations in complex plane. Since there are two inputs, thus, the cross-correlation function between input 1 and input 2 are also included in the input autocorrelations.
\begin{figure}[bt]
\centering
\includegraphics[scale=0.5]{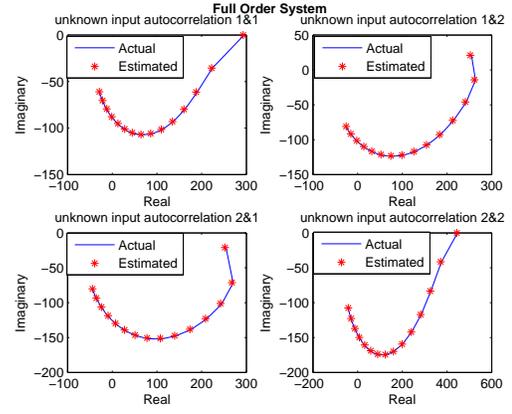}
\caption{Comparison of input autocorrelations}
\label{orr_inputf}
\end{figure}

Before we apply the ASKF for the state estimation, we compare the statistics of the states and outputs of the system perturbed by the unknown inputs we construct and the actual system. Fig. \ref{orr_outputf} shows the comparison between the estimated output autocorrelations and the actual autocorrelations. The comparison of the state autocorrelations is shown in Fig.\ref{orr_statef} for some randomly chosen states. 
\begin{figure}[tb]
\centering
\includegraphics[width=3.3in]{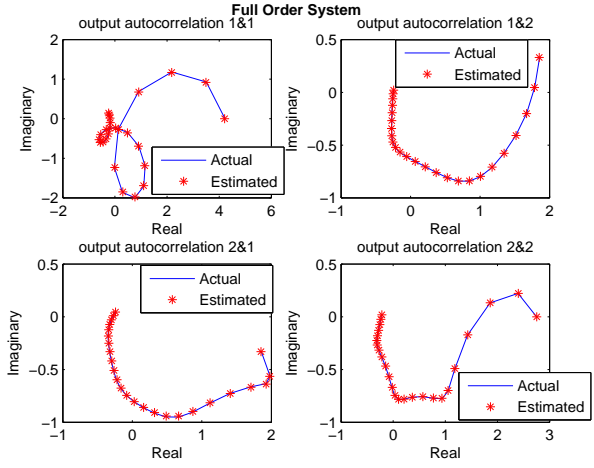}
\caption{Comparison of output autocorrelations}
\label{orr_outputf}
\end{figure}

\begin{figure}[bt]
\centering
\includegraphics[width=3.3in]{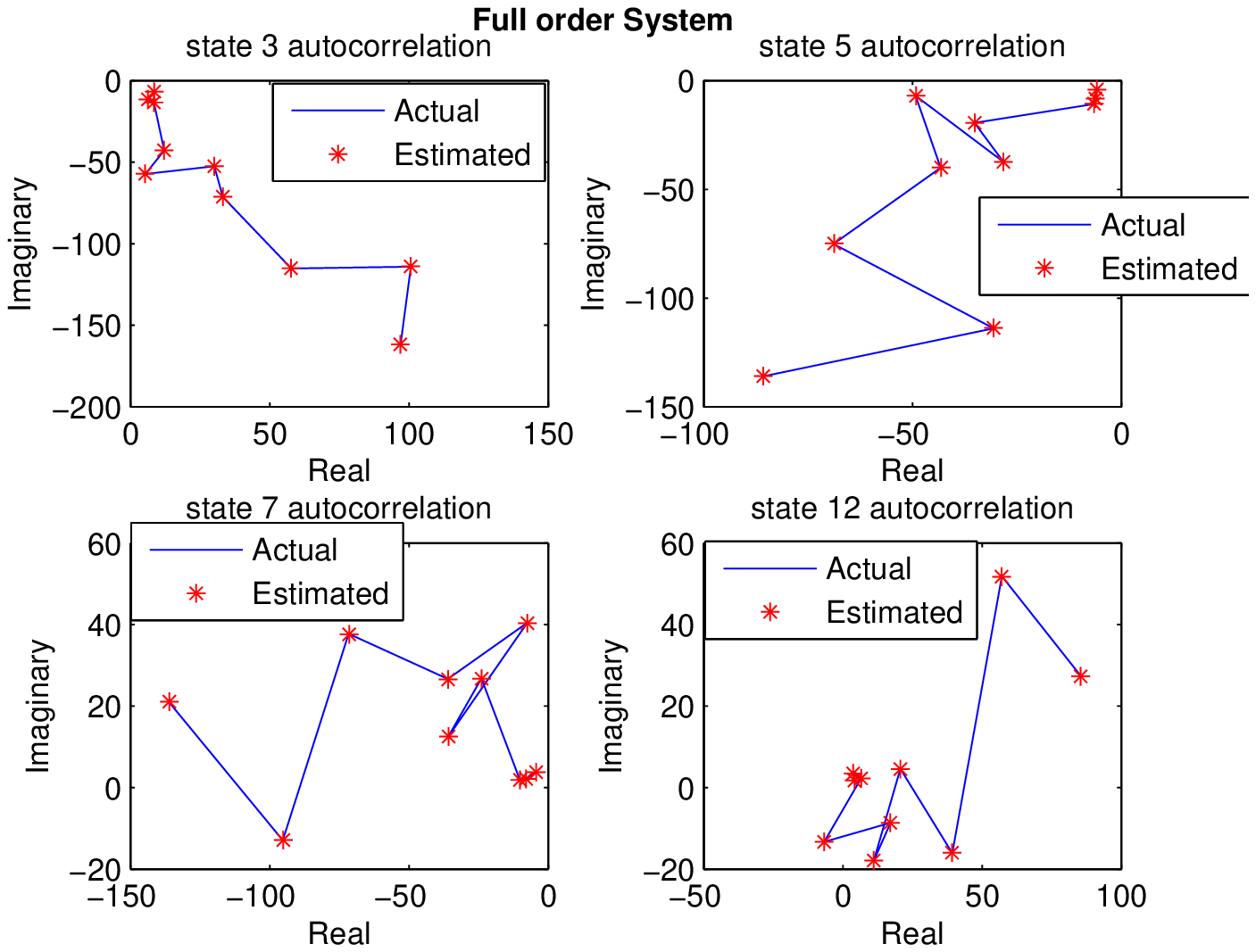}
\caption{Comparison of state autocorrelations}
\label{orr_statef}
\end{figure}

It can be seen that the statistics of the unknown inputs can be recovered almost perfectly, and given the system perturbed by the unknown inputs innovations model we constructed, the statistics of the outputs and the states are almost the same as well.

Next, we compare the performance of the unknown inputs constructed  using the ROM with the full order system. The full order system has 30 states, and the ROM has 15 states. The relative error of the input autocorrelation is shown in Fig. \ref{orr_input_relative}, and the comparison of the relative error of output autocorrelations is shown in Fig.\ref{orr_output_relative}.
\begin{figure}[bt]
\centering
\includegraphics[width=3.3in]{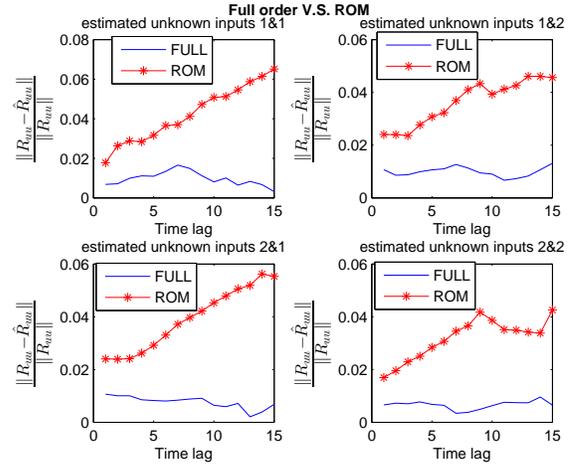}
\caption{Comparison of input autocorrelation relative error}
\label{orr_input_relative}
\end{figure}

\begin{figure}[bt]
\centering
\includegraphics[width=3.3in]{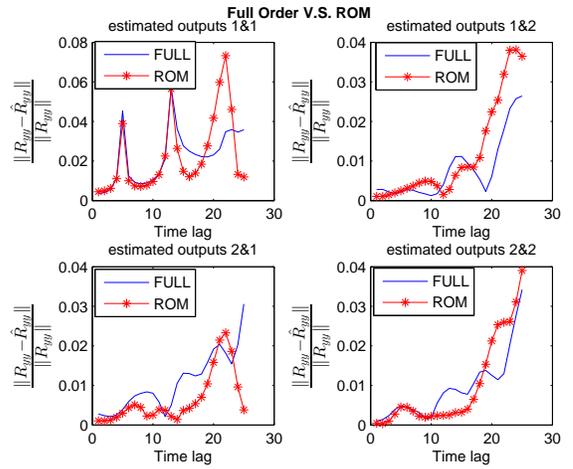}
\caption{Comparison of output autocorrelation relative error}
\label{orr_output_relative}
\end{figure}

The comparison of the relative error of state autocorrelations is shown in Fig. \ref{orr_state_relative}.
\begin{figure}[bt]
\centering
\includegraphics[width=3.3in]{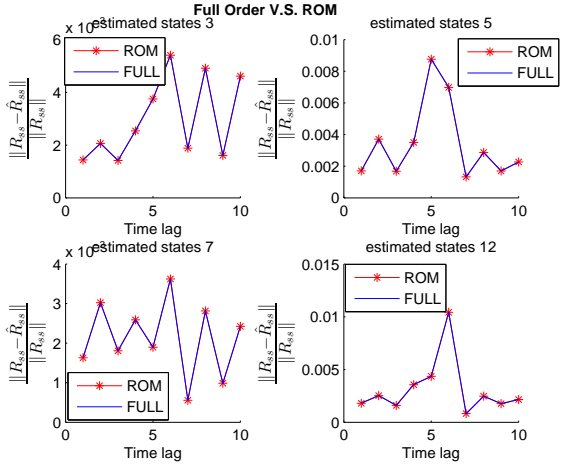}
\caption{Comparison of state autocorrelation relative error}
\label{orr_state_relative}
\end{figure}

We can see that the statistics reconstructed by using the ROM is not as accurate as using the full order system, however, the relative error is on the same scale, and hence, the computational cost is reduced without losing too much accuracy. 

The comparison of the state estimation using the ASKF is shown in Fig. \ref{orr_state_filter}. We randomly choose two states and show the comparison of the acutal state with the estimated states. The state estimation error and $3 \sigma$ bounds are shown. Since the error is complex valued, only the absolute value of the error is shown.
\begin{figure}[bt]
\centering
\includegraphics[width=3.3in]{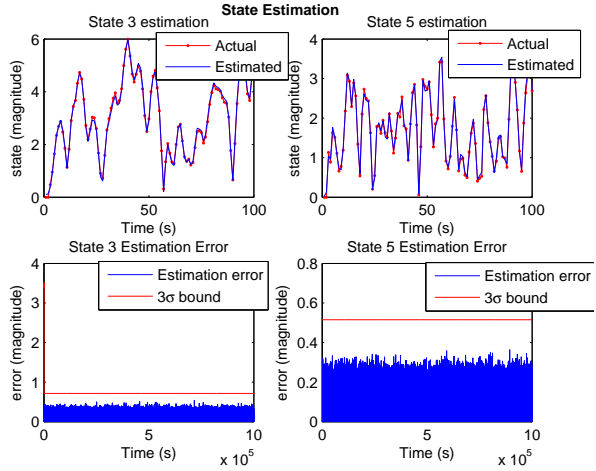}
\caption{State estimation using full order system}
\label{orr_state_filter}
\end{figure}

The state estimation using ROM is shown in Fig. \ref{orr_state_rom_filter}. It can be seen that the kalman filter using the ROM perform well, and hence, for a large scale system, the computational complexity of ASKF can be reduced by using the BPOD. 
\begin{figure}[bt]
\centering
\includegraphics[width=3.3in]{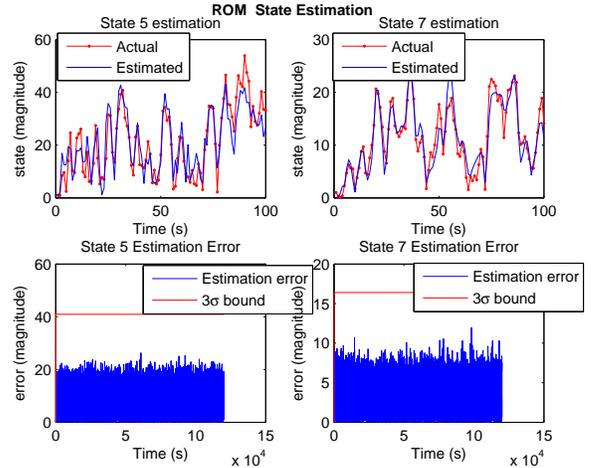}
\caption{State estimation using ROM}
\label{orr_state_rom_filter}
\end{figure}
\section{Conclusion}
In this paper, we have proposed a balanced unknown input realization method for the state estimation of system with unknown stochastic inputs. The unknown inputs are assumed to be a wide sense stationary process with a rational power spectrum, and no other prior information about the unknown inputs needs to be known. We recover the unknown inputs statistics from the output data using a least-squares procedure, and  then construct a balanced minimal realization of the unknown inputs using an AR model and the ERA technique. The recovered innovations model is used for state estimation, and the standard Kalman filter is applied on the augmented system. The next step in this process would require us to consider more complex realistic problems in fluid flow application, and cases where the unknown numbers of inputs/ outputs are large, and also cases where the locations of the inputs are unknown.
\appendices
\section{Proof of Proposition \ref{P3}} \label{AP1}
\begin{proof}
The output autocorrelation function using the first $M$ Markov parameters is:
\begin{eqnarray}
\hat{R}_{yy}^M(m) =  \sum_{i=1}^{M} \sum_{j=1}^{M} h_i R_{uu}(m+i-j) h_j^*.
\end{eqnarray}

Comparing with (\ref{out-in}), the output autocorrelation errors resulting from using $M$ Markov parameters is:
\begin{eqnarray}
\Delta_1(m) = 
\sum_{i=M+1}^{\infty} \sum_{j=1}^{M}  h_i  R_{uu}(m+i-j) h_j^*+\nonumber \\
\sum_{i=M+1}^{\infty} \sum_{j=M+1}^{\infty} h_i R_{uu}(m+i-j) h_j^*+\nonumber \\
 \sum_{i=1}^{M} \sum_{j=M+1}^{\infty}h_i R_{uu}(m+i-j) h_j^*.
\end{eqnarray}

From assumption A5, by choosing $M$ large enough, we have $ \| h_i\|  \leq \delta, i >  M$, where $\delta$ is small enough, thus,
\begin{eqnarray}
\| \Delta_1(m)\|  \leq \sum_{i=M+1}^{\infty} \sum_{j=1}^{M} \delta \times \|R_{uu}(m+i-j)\| \|h_j^*\| \nonumber \\
+ \sum_{i=M+1}^{\infty} \sum_{j=M+1}^{\infty} \delta \times \|R_{uu}(m+i-j)\|  \times \delta+\nonumber \\
+  \sum_{i=1}^{M} \sum_{j=M+1}^{\infty} \| h_i \|  \| R_{uu}(m+i-j)\| \times \delta \leq k_3  \delta,
\end{eqnarray}
 where $ k_3$ is some constant. 

Denote $C_{yu}$ as the ``true" coefficient matrix and $C_{yu}^M$ as the coefficient matrix using $M$ Markov papameters, we need to solve the least squares problem:
\begin{eqnarray}
\text{vec} (\hat{R}_{yy}) = C_{yu}^M \text{vec}(R_{uu}^M).
\end{eqnarray}
where $R_{uu}^M$ is the input autocorrelation we recover from using $M$ Markov parameters, and $\text{vec} (\hat{R}_{yy})$ is defined in (\ref{autocoef}).

Since $ \| \text{vec} (\hat{R}_{yy}(m)) - \text{vec} (\hat{R}_{yy}^M(m)) \|_2 = \|  \hat{R}_{yy}(m) -  \hat{R}_{yy}^M(m) \| = \| \Delta_1(m) \| \leq k_3 \delta $ , we have $\text{vec} (\hat{R}_{yy}(m)) = \text{vec} (\hat{R}_{yy}^M(m)) + \Delta_2(m)$, where  $\| \Delta_2 (m) \|_2  \leq k_3 \delta$, or equivalently
\begin{eqnarray}\label{err_ryy}
\text{vec} (\hat{R}_{yy}) = \text{vec} (\hat{R}_{yy}^M) + \Delta_2,
\end{eqnarray}

Consider (\ref{autocoef}), $\text{vec}(\hat{R}_{yy})$ and $\text{vec} (\hat{R}_{yy}^M) $ can be written as:
\begin{eqnarray}
\text{vec}(\hat{R}_{yy}) = C_{yu}   \text{vec}(R_{uu}), \nonumber \\
\text{vec}(\hat{R}_{yy}^M(m)) = C_{yu}^M  \text{vec} (R_{uu}),
\end{eqnarray}
Substitute into (\ref{err_ryy}), we have:
\begin{eqnarray} \label{coeff_err}
C_{yu} \text{vec}(R_{uu}) - C_{yu}^M \text{vec}(R_{uu}) = \Delta_2. 
\end{eqnarray}

Since $(C_{yu}^M)^{-1}$ exists, we have:
\begin{eqnarray}
\text{vec }(R_{uu}) - \text{vec}(R_{uu}^M )= (C_{yu}^M)^{-1} \Delta_2,
\end{eqnarray}
which means:
\begin{eqnarray}
\| \text{vec}(R_{uu}) - \text{vec}( R_{uu}^M) \|_2 \leq k_M \delta, 
\end{eqnarray}
where $k_M$ is some constant. Thus, we have $ \| \Delta_M(m)\| \leq k_M \delta$, where $k_M$ is some constant.
\end{proof}

\section{Proof of Proposition \ref{P4}}\label{AP2}
\begin{proof}
(\ref{re_yuinf}) can be seperated into two parts:
\begin{eqnarray}
\text{vec} (\hat{R}_{yy} (m))=\sum_{i=1}^{\infty} \sum_{j=1}^{\infty} \bar{h}_j\otimes h_i \text{vec} (\underbrace{R_{uu}(m+i-j)}_{ |m+i-j| \leq N_i}) \nonumber \\
+\sum_{i=1}^{\infty} \sum_{j=1}^{\infty} \bar{h}_j \otimes h_i \text{vec} (\underbrace{R_{uu}(m+i-j)}_{ |m+i-j| >  N_i}).
\end{eqnarray}

Thus, it can be written as:
\begin{eqnarray}
\text{vec}(\hat{R}_{yy} (m)) = \text{vec}(\hat{R}^N_{yy} (m)) + \Delta_4(m),
\end{eqnarray}
where 
\begin{eqnarray}
\| \Delta_4(m) \|_2 = \| \sum_{i=1}^{\infty} \sum_{j=1}^{\infty} \bar{h}_j \otimes h_i \text{vec} (\underbrace{R_{uu}(m+i-j)}_{ |m+i-j| >  N_i})\|_2 \nonumber \\
\leq \sum_{i=1}^{\infty} \sum_{j=1}^{\infty} \| \bar{h}_j \otimes h_i \|_2 \times \delta \leq k_4 \delta, 
\end{eqnarray}
where $k_4$ is some constant. $\| A \|_2$ denotes the induced 2-norm of matrix $A$.  Following the same procedure as in Proposition \ref{P3}, it can be proved that $\| \Delta_N(m) \| \leq k_N \delta$, where $k_N$ is some constant.
\end{proof}
\section{Proof of Proposition \ref{P5}} \label{AP3}
\begin{proof}
Denote output autocorrelation in (\ref{re_yu_cut}) as $\hat{R}_{yy}^c (m) $,
comparing (\ref{re_yu_cut}) with (\ref{re_yuinf}), the output autocorrelation error resulting from assumption A5 and A6 is:
\begin{eqnarray}
\text{vec}(\hat{R}_{yy}) - \text{vec}(\hat{R}_{yy}^c) = \Delta_2 +  \nonumber \\
\sum_{i=1}^{M} \sum_{j=1}^{M} \bar{h}_j \otimes h_i \text{vec} (\underbrace{R_{uu}(m+i-j)}_{|m + i-j| > N_i}) 
\leq \Delta_2 + \Delta_4.
\end{eqnarray}

Thus 
\begin{eqnarray}
\|\text{vec}(\hat{R}_{yy}) - \text{vec}(\hat{R}_{yy}^c) \|_2  \leq \| \Delta_2 \|_2 + \| \Delta_4 \|_2 \leq k_5 \delta,
\end{eqnarray}
where $k_5$ is some constant. Following the same precedure as in Proposition \ref{P3}, we can prove:
\begin{eqnarray}
\| \Delta(m) = R_{uu}(m) - \hat{R}_{uu}(m) \| \leq k \delta.
\end{eqnarray}
\end{proof}

\section{Brief Description of ERA and BPOD}\label{AP4}
The Eigensystem Realization Algorithm is summarized as follows.

Run inpulse response simulations of the linear system (\ref{original system}), and collect the snapshots of the outputs $y_k$ in the following patten:
\begin{eqnarray}
Y_1 = CB, Y_2 = CAB, \cdots, Y_k = C A^{k-1} B, 
\end{eqnarray}
where $CA^{k}B$ are known as Markov parameters. Construct a Hankel matrix $H(k)$
\begin{eqnarray}
H(k-1) = \begin{pmatrix} Y_k & Y_{k+1} & \cdots & Y_{k+ \beta -1} \\
				Y_{k+1} & Y_{k+2} & \cdots & Y_{k+ \beta} \\ 
				\vdots & \vdots & \cdots & \vdots \\
				Y_{k + \alpha -1} & Y_{k + \alpha} & \cdots & Y_{k + \alpha + \beta -2} \end{pmatrix}.
\end{eqnarray}

Solve the singular value decomposition (SVD) problem of $H(0)$, i.e.,
\begin{eqnarray}
H(0) = R \Sigma S^*.
\end{eqnarray}

Denote $\Sigma_n$ as the first $n$ non-zero singular value of $\Sigma$, and $R_n$, $S_n$ as the matrices formed by the first $n$ columns of $R$ and $S$ respectively. Then the realization for the ERA is:
\begin{eqnarray}
\hat{A} = \Sigma_n^{-1/2} R_n^* H(1) S_n \Sigma_n^{-1/2}, \nonumber \\
\hat{B} = \text{ first $p$ columns of } \Sigma_n^{1/2} S_n^* \nonumber \\
\hat{C} = \text{ first $q$ rows of  } R_n \Sigma_n ^{1/2}
\end{eqnarray}

The Balanced POD procedure using the impulse response of the primal and adjoint system and is summarized below.

Consider the linear system (\ref{original system}), and denote $B = [b_1, b_2, \cdots, b_p]$, $C = [c_1, c_2, \cdots, c_q]^*$. We collect the impulse response of the primal system by using $b_j$, $j = 1, 2, \cdots, p$, as initial conditions for the simulation of the system, 
\begin{eqnarray} \label{dps}
x_k = Ax_{k-1},
\end{eqnarray}

If we take $\alpha$ snapshots across the trajectories at time $t_1, t_2, \cdots, t_{\alpha}$, resulting an $N \times p\alpha$ matrix
\begin{eqnarray}
X= [x_1(t_1), \cdots , x_1(t_{\alpha}), \cdots, x_p(t_1), \cdots, x_p(t_{\alpha})],
\end{eqnarray}
where $x_j(t_k) $ is the state snapshot $x_k$ with $b_j$ as the initial condition.

Similarly, we use the transposed rows of the output matrix $c_i^*$, as the initial conditions for the simulations of the adjoint system $A^*$, 
\begin{eqnarray}
z_k = A^*z_{k-1},
\end{eqnarray}
and take $\beta$ snapshots across trajectories, leading to the adjoint snapshot ensemble $Y$,
\begin{eqnarray}
Y= [z_1(t_1), \cdots, z_1(t_{\beta}), \cdots, z_p(t_1), \cdots, z_p(t_{\beta})],
\end{eqnarray}
where $z_i(t_k)$ is the state snapshot $z_k$  with $c_i^*$ as the initial condition.

The Hankel matrix $H$ is constructed as:
\begin{eqnarray}
H= Y^*X.
\end{eqnarray}

Then we solve the SVD problem of the matrix $H$:
\begin{eqnarray}
H=Y^*X = U \Sigma V^*.
\end{eqnarray}

Assume that $\Sigma_1$ consists of  the first $r$ non-zero singular values of $\Sigma$, and $(U_1, V_1)$ are the corresponding left and right singular vectors from $(U,V)$, then the POD projection matrices can be defined as:
\begin{eqnarray}
T_r = XV_1 \Sigma_1^{-\frac{1}{2}}, \nonumber\\
T_l = YU_1 \Sigma_1^{-\frac{1}{2}},
\end{eqnarray}
and the reduced order model constructed using BPOD method is:
\begin{eqnarray}
\begin{cases}
A_r = T_l^* A T_r \\
B_r = T_l^* B \\
C_r = C T_r
\end{cases}
\end{eqnarray}

\bibliographystyle{IEEEtran}
\bibliography{IPOD_refs}

\end{document}